\documentclass[11pt,a4paper]{article}
\textheight 21 true cm \textwidth 14.5 true cm \oddsidemargin 1cm
\evensidemargin 1cm \baselineskip=25pt
\date{}

\usepackage{amssymb}
\usepackage{latexsym}
\usepackage{amsmath,amssymb}
\usepackage{amsthm,enumerate,verbatim}
\usepackage{amsfonts}
\usepackage{graphicx}
\usepackage{xcolor}
\usepackage{subfigure}
\usepackage{multimedia}
\usepackage[lined,boxed,commentsnumbered, ruled]{algorithm2e}
\topmargin=0pt \oddsidemargin=0pt \evensidemargin=0pt
\textwidth=15.5cm \textheight=21.5cm

\numberwithin{equation}{section}
\newtheorem{assumption}{Assumption}
\newtheorem{definition}{Definition}[section]
\newtheorem{theorem}{Theorem}[section]
\newtheorem{lemma}{Lemma}[section]
\newtheorem{example}{Example}[section]

\newtheorem{remark}{Remark} [section]

\newcommand{\R}{{\mathbb{R}}}
\newcommand{\IN}{{\mathbb{N}}}
\newcommand{\dist}{{\rm dist}}


\title{Incremental Subgradient Methods for Minimizing The Sum of Quasi-convex Functions}

\author{Yaohua Hu\thanks{College of Mathematics and Statistics, Shenzhen University, Shenzhen 518060, P. R. China (mayhhu@szu.edu.cn). This author's work was supported in part by the National Natural Science Foundation of China (11601343) and Natural Science Foundation of Guangdong (2016A030310038).}
\and Carisa Kwok Wai Yu\thanks{Department of Mathematics and Statistics, Hang Seng Management College, Hong Kong (carisayu@hsmc.edu.hk). This author's work was supported in part by grants from the Research Grants Council of the Hong Kong Special Administrative Region, China (UGC/FDS14/P03/14 and UGC/FDS14/P02/15).}
\and Xiaoqi Yang\thanks{Department of Applied Mathematics, The Hong Kong Polytechnic University, Kowloon, Hong Kong (mayangxq@polyu.edu.hk). This author's work was supported in part by the Research Grants Council of Hong Kong (PolyU 152167/15E).}
}

\begin{document}
\maketitle

%
%


\noindent {\bf Abstract}\quad
The sum of ratios problem has a variety of important applications in economics and management science, but it is difficult to globally solve this problem. In this paper, we consider the minimization problem of a sum of a number of nondifferentiable quasi-convex component functions over a closed and convex set, which includes the sum of ratios problem as a special case. The sum of quasi-convex component functions is not necessarily to be quasi-convex, and so, this study goes beyond quasi-convex optimization. Exploiting the structure of the sum-minimization problem, we propose a new incremental subgradient method for this problem and investigate its convergence properties to a global optimal solution when using the constant, diminishing or dynamic stepsize rules and under a homogeneous assumption and the H\"{o}lder condition of order $p$. To economize on the computation cost of subgradients of a large number of component functions, we further propose a randomized incremental subgradient method, in which only one component function is randomly selected to construct the subgradient direction at each iteration. The convergence properties are obtained in terms of function values and distances of iterates from the optimal solution set with probability 1. The proposed incremental subgradient methods are applied to solve the sum of ratios problem, as well as the multiple Cobb-Douglas productions efficiency problem, and the numerical results show that the proposed methods are efficient for solving the large sum of ratios problem.

\noindent {\bf Keywords}\quad  Quasi-convex programming, sum-minimization problem, subgradient method, incremental approach, convergence analysis

\section{Introduction}
In recent years, a great amount of attention has been attracted to the research of minimizing a sum of a number of nondifferentiable component functions:
\begin{equation}\label{eq-SP}
  \begin{array}{ll}
  \text{min}&f(x):=\sum_{i=1}^m f_i(x)\\
     \text{s.t.}  & x\in X,
    \end{array}
\end{equation}
where $f_i:\R^n\to \R$, $i=1,\dots,m$, are real-valued functions, and $X\subseteq \R^n$ is a closed set.

The type of convex sum-minimization problems, i.e., problem \eqref{eq-SP} with each $f_i$ being convex and $X$ being convex, has been widely studied in various applications, such as the Lagrangian dual of the coupling constraints of large-scale separable optimization problem \cite{Bertsekas11,Nedic01}, the distributed optimization problem in large-scale sensor networks \cite{BertsekasTsitsiklis96,RaN05} and the empirical risk minimization problem in online machine learning \cite{DuchiJMLR11,Mairal2015}. Motivated by vast applications of problem \eqref{eq-SP}, the development of optimization algorithms has become an important issue of the sum-minimization problem, and many practical numerical algorithms have been proposed to solve problem \eqref{eq-SP}; see \cite{Bertsekas99,Bertsekas11,HuSIOPT16,Mairal2015,XiaoTZhang14} and references therein.
In particular, the class of subgradient methods are popular and effective iterative algorithms for solving the large-scale convex sum-minimization problem \eqref{eq-SP}, due to the  simple formulation and low storage requirement. The subgradient method was originally introduced to solve a nondifferentiable convex optimization problem by Polyak \cite{Polyak67} and Ermoliev \cite{Ermoliev66}, and until now, various variants of subgradient methods have been studied to solve structured optimization problems; see \cite{Auslender09,Bertsekas2015,Kiwiel04,LarssonPS96,Nedic01,Nesterov09,Shor85} and references therein.
To meet the structure of sum-minimization problem \eqref{eq-SP}, an idea of incremental approach has been proposed to perform the subgradient process incrementally, by sequentially taking steps along the subgradients of component functions, with intermediate adjustment of the variables after processing each component function. That is, an iteration of the incremental subgradient method can be viewed as a cycle of $m$ subiterations, starting from $z_{k,0}:=x_k$, through $m$ steps
\begin{equation}\label{eq-incre}
z_{k,i}:=P_X \big(z_{k,i-1}-v_k g_{k,i}\big),\quad g_{i,k}\in \partial f_i(z_{k,i-1}),~~i=1,\ldots,m,
\end{equation}
and finally arriving at $x_{k+1}:=z_{k,m}.$
The incremental subgradient method has gained successful applications in large-scale sensor networks and online machine learning; see, e.g., \cite{DuchiJMLR11,RaN05}. So far, many articles have been devoted to the study of convergence analysis and applications of different types of incremental subgradient methods; see \cite{Bertsekas99,JohanRJ09,Kiwiel04,Nedic01,NetoPierro09,RamNV09,Shi09} and references therein.
In particular, Nedi\'{c} and Bertsekas \cite{Nedic01} investigated the convergence properties of the incremental subgradient methods, including the deterministic and stochastic ones, for the constant/diminishing/dynamic stepsize rules; later the authors extended these convergence results to the inexact incremental subgradient method with the deterministic noise in \cite{Nedic10}. Shi et al. \cite{Shi09} proposed a normalized incremental subgradient method, analyzed its convergence theory and demonstrated its application in wireless sensor networks.
Neto and Pierro \cite{NetoPierro09} explored the incremental subgradient method in a generic framework consisting of an optimality step and a feasibility step,
where both approximate subgradients and approximate projections are allowed, and illustrated its application in tomographic imaging.

Most papers in the literature of subgradient methods focus on convex optimization problems. Recently, much attention have been received beyond convex optimization. Quasi-convex optimization problems can be found in many important applications in various areas, such as economics, engineering, management science and various applied sciences; see  \cite{Avriel10,CrouzeixMM98,HaKS05} and references therein. The subgradient methods have been well extended to solve quasi-convex optimization problems, such as the standard subgradient method \cite{Kiwiel01}, inexact subgradient method \cite{HuEJOR15} and stochastic subgradient method \cite{HuJNCA16}. 
The convergence results of subgradient methods, in terms of objective values and iterates, for solving quasi-convex optimization problems have been well established under the H\"{o}lder condition of order $p$ and using the constant/diminishing/dynamic stepsize rules.
However, to the best of our knowledge, there is still no study devoted to investigating subgradient methods for solving the sum-minimization problem \eqref{eq-SP} for the case when each $f_i$ is quasi-convex.

The sum of ratios problem (in short, SOR) \cite{SchaibleSOR03} has a variety of important applications in economics and management science, such as multi-stage stochastic shipping \cite{AlmogySOR70}, government contracting \cite{ColantoniSOR69} and bond portfolio optimization \cite{KonnoSOR89}. Although some numerical methods, such as the branch and bound scheme \cite{BensonSOR10} and the interior point method \cite{FreundSOR2001}, have been studied to solve the SOR, most of them are computationally expensive to implement for large-scale problems or incapable to globally solve the SOR. The absence of effective numerical algorithms for large-scale problems hinders the research and applications of the SOR. Exploiting its structure, the SOR can be formulated as a sum-maximization problem of a number of quasi-concave component functions (see section 4 for the explanation), and so it is an important application of problem \eqref{eq-SP}. However, there is still no effective numerical algorithms for the large-scale sum-minimization problem \eqref{eq-SP} of quasi-convex functions, as well as the SOR.

To fill this gap, the aim of this paper is to develop the incremental subgradient methods for minimizing the sum of a number of quasi-convex component functions over a constraint set. In the remainder of this paper, we consider the sum-minimization problem \eqref{eq-SP} under the following hypothesis:
\begin{itemize}
  \item $f_i:\R^n\to \R$ is quasi-convex and continuous for each $i=1,\dots,m$, and $X\subseteq \R^n$ is nonempty, closed and convex.
\end{itemize}
Note that $f$ defined in \eqref{eq-SP}, the sum of quasi-convex component functions, is not necessarily to be quasi-convex.
The study of this paper is indeed beyond quasi-convex optimization, and so the direct application of standard subgradient method \cite{Kiwiel01} to solve problem \eqref{eq-SP} is not necessarily convergent.

Inspired by the idea of incremental approach, we propose a new incremental subgradient method to solve the sum-minimization problem \eqref{eq-SP}, which is different from the classical incremental subgradient method \eqref{eq-incre} in that the subgradient subiterations are only updated on the component functions whose minimal values are not achieved yet. Under a homogeneous assumption and the H\"{o}lder condition of order $p$ for the component functions, we provide a proper basic inequality and establish the convergence properties of the proposed incremental subgradient method when using the constant/diminishing/dynamic stepsize rules. The convergence properties are characterized in terms of function values and distances of iterates from the optimal solution set, and the finite convergence behavior to the optimality is further investigated when the solution set has a nonempty interior.

In the incremental subgradient method, the estimation of subgradients of all component functions at each iteration may be very expensive, especially when the number of component functions is large and no simple formulae for computing the subgradients exist. Note that the stochastic gradient descent algorithm is increasingly popular in large-scale machine learning problems; see \cite{Bertsekas2015,DuchiJMLR11,TanMaDQ2016,XiaoTZhang14} and references therein.
Employing the idea of stochastic gradient descent algorithm, we propose a randomized incremental subgradient method to save the computational cost of the incremental subgradient iteration, in which only one component function is randomly selected to construct the subgradient direction at each iteration.
The convergence results show that the randomized incremental subgradient method enjoys the convergence properties with probability 1 and achieves a better tolerance than that of the deterministic incremental subgradient method.

Furthermore, we formulate the SOR as a sum-minimization problem \eqref{eq-SP}, consider the multiple Cobb-Douglas productions efficiency problem (in short, MCDPE) \cite{BradleyFrey74} as an application of the SOR, and conduct numerical experiments on this problem via applying the proposed incremental subgradient methods. The numerical results show that the incremental subgradient methods are efficient for the MCDPE, especially for large-scale problems. This study may deliver a new approach for finding the global optimal solution of the large-scale SOR.

The paper is organized as follows. In section 2, we present the notations and preliminary results used in this paper. In section 3, we propose the deterministic and randomized incremental subgradient methods to solve the sum-minimization problem \eqref{eq-SP} and investigate their convergence properties when using the typical stepsize rules. The application to the SOR and the numerical study for the MCDPE are presented in section 4.

\section{Notations and preliminary results}
The notations used in this paper are standard; see, e.g., \cite{Bertsekas99,Bertsekas2015}.
We consider the $n$-dimensional Euclidean space $\R^n$ with inner product $\langle \cdot,\cdot\rangle$ and norm $\|\cdot\|$. For $x\in \R^n$ and $\delta\in\R_+$, we use $B(x,\delta)$ and $S(x,\delta)$ to denote the closed ball and the sphere of radius $\delta$ centered at $x$, respectively.
For $x\in \R^n$ and $Z\subseteq \R^n$, we write $\dist(x, Z)$ and $P_Z(x)$ to denote the Euclidean distance of $x$ from $Z$ and the Euclidean projection of $x$ onto $Z$, respectively, that is,
\begin{equation*}
\dist(x, Z):=\inf_{z\in Z}\|x-z\|\quad {\rm and} \quad P_Z(x):={\rm arg}\min_{z\in Z} \|x-z\|.
\end{equation*}
A function $h:\R^n\rightarrow \R$ is said to be quasi-convex if
\begin{equation*}
h((1-\alpha)x+\alpha y)\le \max\{h(x),h(y)\}\quad \mbox{for any $x,y\in \R^n$ and any $\alpha\in [0,1]$}.
\end{equation*}
For any $\alpha\in\R$, the level sets of $h$ are denoted by
\[
{\rm lev}_{<\alpha}h:=\{x\in \R^n :h(x)<\alpha\}\quad {\rm and} \quad {\rm lev}_{\le \alpha}h:=\{x\in \R^n :h(x)\le\alpha\}
\]
It is well-known that $h$ is quasi-convex if and only if ${\rm lev}_{<\alpha}h$ $\left({\rm and/or}~{\rm lev}_{\le \alpha}h\right)$ is convex for any $\alpha\in\R$. A function $h:\R^n\rightarrow \R$ is said to be coercive if $\lim_{\|x\|\to \infty} h(x)=\infty$, and so ${\rm lev}_{\le \alpha}h$ is bounded for any $\alpha\in\R$.

The subdifferential of quasi-convex function plays an important role in quasi-convex optimization. Several different types of subdifferentials of quasi-convex function have been introduced; see \cite{Aussel00,GrP73,HuEJOR15,Kiwiel01} and references therein. In particular, Kiwiel \cite{Kiwiel01} and Hu et al. \cite{HuEJOR15} introduced a notion of quasi-subdifferential defined a normal cone to ${\rm lev}_{<h(x)}h$, and used the related quasi-subgradient in their proposed subgradient methods. In the following definition, we recall the notion of quasi-subdifferential taken from \cite{HuEJOR15}.
\begin{definition}
Let $h:\R^n \to \R$ be a quasi-convex function, and let $x\in \R^n$. The quasi-subdifferential of $h$ at $x$ is defined by
\begin{equation}\label{eq-def-QS}
\partial h(x)=\left\{g:\langle g,y-x\rangle \le 0 \mbox{ for any } y\in {\rm lev}_{<h(x)}h\right\}.
\end{equation}
\end{definition}

The H\"{o}lder condition of order $p$ was used in \cite{Konnov94} to describe some properties of quasi-subgradient, and it plays an important role in the convergence study of subgradient-type methods for quasi-convex optimization problems \cite{HuEJOR15,HuJNCA16}.
\begin{definition}\label{def-HC}
Let $p>0$, $L>0$ and $x\in \R^n$. $h:\R^n\rightarrow \R$ is said to satisfy the H\"{o}lder condition of order $p$ with modulus $L$ at $x$ if
\begin{equation}\label{eq-HC}
\mbox{$|h(y)-h(x)|\le L\|y-x\|^{p}$\quad for any $y\in \R^n$}.
\end{equation}
$h$ is said to satisfy the H\"{o}lder condition of order $p$ with modulus $L$ on $X$ if \eqref{eq-HC} holds for any $x\in X$.
\end{definition}

The H\"{o}lder condition with order 1 is reduced to the Lipschitz condition, and this property holds for very broad classes of functions with various values of $p\in (0,+\infty)$. The following lemma recalls an important property of a quasi-convex function that satisfies the H\"{o}lder condition of order $p$. This property is a key to establish a basic inequality in convergence analysis of subgradient-type methods.
\begin{lemma}[{\cite[Proposition 2.1]{Konnov03}}]\label{lem-HC}
Let $h:\R^n\rightarrow \R$ be a quasi-convex and continuous function, $X$ be a closed and convex set, and let $X^*$ be the set of minima of $h$ over $X$.
Let $p>0$ and $L>0$, and suppose that $h$ satisfies the H\"{o}lder condition of order $p$ with modulus $L$ at some $x^*\in X^*$.
Then, for any $x\in X\setminus X^*$, it holds that
\[
\mbox{$h(x)-h(x^*)\le L \left\langle g,x-x^*\right\rangle^p$\quad for any $g\in \partial h(x) \cap S(0,1)$}.
\]
\end{lemma}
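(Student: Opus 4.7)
The plan is to exploit the quasi-subdifferential inequality together with the H\"older bound to produce, for each $\varepsilon>0$ small, a feasible ``test point'' in the strict level set ${\rm lev}_{<h(x)}h$ whose inner product with $g$ is essentially $(\delta/L)^{1/p}$, where $\delta:=h(x)-h(x^*)>0$. Because $x\notin X^*$ we have $\delta>0$, and in particular $x^*\in{\rm lev}_{<h(x)}h$, so by the definition of $\partial h(x)$ in \eqref{eq-def-QS} we already know $\langle g,x-x^*\rangle\ge 0$. The task is to sharpen this nonnegativity into the quantitative lower bound $\langle g,x-x^*\rangle\ge (\delta/L)^{1/p}$.

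First I would construct the test point. Fix $g\in\partial h(x)\cap S(0,1)$ and, for any $t$ with $0\le t<(\delta/L)^{1/p}$, set $z_t:=x^*+tg$. Since $\|g\|=1$, $\|z_t-x^*\|=t$, so the H\"older condition \eqref{eq-HC} at $x^*$ gives
\[
h(z_t)\le h(x^*)+L\|z_t-x^*\|^p = h(x^*)+Lt^p < h(x^*)+\delta = h(x),
\]
which shows $z_t\in{\rm lev}_{<h(x)}h$. The quasi-subdifferential inequality in \eqref{eq-def-QS} applied to $y=z_t$ then yields $\langle g,z_t-x\rangle\le 0$, i.e., $\langle g,x-z_t\rangle\ge 0$.

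Next I would decompose
\[
\langle g,x-x^*\rangle = \langle g,x-z_t\rangle + \langle g,z_t-x^*\rangle \ge 0 + t\|g\|^2 = t,
\]
where the last equality uses $\|g\|=1$. Letting $t\nearrow (\delta/L)^{1/p}$ gives $\langle g,x-x^*\rangle\ge (\delta/L)^{1/p}$. Since both sides are nonnegative, raising to the power $p$ and multiplying by $L$ produces $L\langle g,x-x^*\rangle^p \ge \delta = h(x)-h(x^*)$, which is exactly the claim.

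The only subtlety I foresee is that the quasi-subdifferential is defined through the \emph{strict} sublevel set, so one must keep $t$ strictly below the threshold $(\delta/L)^{1/p}$ in order to guarantee $h(z_t)<h(x)$; this is harmless and is resolved by the limit $t\nearrow(\delta/L)^{1/p}$, which passes to the continuous quantity $\langle g,x-x^*\rangle$. No convexity or smoothness of $h$ beyond quasi-convexity and the local H\"older bound at the single point $x^*$ is needed, and the argument does not use the constraint set $X$ at all except to locate $x^*$; this flexibility is exactly what makes the inequality a suitable workhorse for the basic inequalities developed later for the incremental subgradient iteration.
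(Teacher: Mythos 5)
Your proof is correct: the test point $z_t=x^*+tg$ lies in ${\rm lev}_{<h(x)}h$ by the H\"older condition at $x^*$, the quasi-subdifferential inequality \eqref{eq-def-QS} then gives $\langle g,x-z_t\rangle\ge 0$, and the limit $t\nearrow(\delta/L)^{1/p}$ yields the quantitative bound. The paper itself does not prove this lemma but imports it from Konnov's Proposition 2.1, and your argument is exactly the standard one behind that result, so there is nothing to flag.
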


We end this section by recalling the following two lemmas, which are useful in the convergence analysis of incremental subgradient methods.
\begin{lemma}[{\cite[Lemma 4.1]{HuangYang03}}]\label{lem-IneQ-HYMOR}
Let $a_i\ge 0$ for $i=1,2,\dots,n$. The following relations hold.
\begin{enumerate}[{\rm (i)}]
  \item If $\gamma\in (0,1]$, then
  \[
  \Big(\sum_{i=1}^n a_i\Big)^\gamma\le \sum_{i=1}^n a_i^\gamma \le n\Big(\sum_{i=1}^n a_i\Big)^\gamma.
  \]
  \item If $\gamma\in [1,\infty)$, then
  \[
  \frac{1}{n^{\gamma-1}}\Big(\sum_{i=1}^n a_i\Big)^\gamma\le \sum_{i=1}^n a_i^\gamma \le \Big(\sum_{i=1}^n a_i\Big)^\gamma.
  \]
\end{enumerate}
\end{lemma}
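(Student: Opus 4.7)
The plan is to split the four inequalities into two pairs. Two of them are essentially one-liners coming from monotonicity and from Jensen's inequality, while the remaining two reduce to the scalar comparison of $t^\gamma$ with $t$ on $[0,1]$ after the natural normalization $u_i := a_i/S$, where $S := \sum_{i=1}^n a_i$.

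First I would dispose of the two ``easy'' bounds. In part (i), each $a_i\in[0,S]$ and $t\mapsto t^\gamma$ is nondecreasing on $[0,\infty)$, so $a_i^\gamma\le S^\gamma$ and summation over $i$ yields the upper bound $\sum_{i=1}^n a_i^\gamma\le nS^\gamma$. In part (ii), convexity of $t\mapsto t^\gamma$ for $\gamma\ge 1$ together with Jensen's inequality gives $\bigl(\tfrac{1}{n}\sum_i a_i\bigr)^\gamma\le \tfrac{1}{n}\sum_i a_i^\gamma$, which rearranges precisely to the stated lower bound $\tfrac{1}{n^{\gamma-1}}S^\gamma\le \sum_i a_i^\gamma$.

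For the remaining two inequalities I would assume without loss of generality that $S>0$ (otherwise all $a_i$ vanish and everything is trivial) and set $u_i:=a_i/S\in[0,1]$, so that $\sum_{i=1}^n u_i=1$. The key elementary fact is that $u^\gamma\ge u$ on $[0,1]$ when $\gamma\in(0,1]$, and $u^\gamma\le u$ on $[0,1]$ when $\gamma\ge 1$. Summing componentwise and multiplying through by $S^\gamma$ then delivers $\sum_i a_i^\gamma\ge S^\gamma$ in case (i) and $\sum_i a_i^\gamma\le S^\gamma$ in case (ii), completing both remaining bounds.

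The main ``obstacle'' is essentially conceptual rather than technical: all four bounds are routine consequences of the convexity/concavity and monotonicity of $t\mapsto t^\gamma$, and no delicate estimate is required. The only care needed is the normalization trick, which transports the componentwise comparison $t^\gamma\gtreqless t$ to the interval $[0,1]$ where it is immediate; once that is in place the argument collapses to a single summation followed by rescaling.
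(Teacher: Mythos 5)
Your proof is correct: the upper bound in (i) by monotonicity, the lower bound in (ii) by Jensen, and the remaining two bounds by the normalization $u_i = a_i/S$ combined with the comparison $u^\gamma \ge u$ (for $\gamma \le 1$) or $u^\gamma \le u$ (for $\gamma \ge 1$) on $[0,1]$ all go through, and you correctly dispose of the degenerate case $S=0$. There is nothing in the paper to compare against: the lemma is quoted from \cite[Lemma 4.1]{HuangYang03} without proof, so your argument serves as a clean, self-contained verification of the cited result.
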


\begin{lemma}[{\cite[Lemma 2.1]{Kiwiel04}}]\label{lem-averaging}
Let $\{a_k\}$ be a sequence of scalars, and let $\{v_k\}$ be a sequence of nonnegative scalars.
Suppose that $\lim_{k\to \infty} \sum_{i=1}^k v_i= \infty$. Then it holds that
\[
\liminf_{k\to \infty} a_k\le \liminf_{k\to \infty} \frac{\sum_{i=1}^k v_i a_i}{\sum_{i=1}^k v_i}
\le \limsup_{k\to \infty} \frac{\sum_{i=1}^k v_i a_i}{\sum_{i=1}^k v_i}\le \limsup_{k\to \infty}a_k.
 \]
In particular, if $\lim_{k\to \infty}a_k=a$, then $\lim_{k\to \infty} \frac{\sum_{i=1}^k v_i a_i}{\sum_{i=1}^k v_i}=a$.
\end{lemma}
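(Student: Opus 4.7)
The plan is to prove this as a weighted Ces\`{a}ro averaging statement by reducing it to the two outer inequalities, since the middle inequality $\liminf \le \limsup$ is automatic. I also observe that the ``in particular'' clause is an immediate corollary of the chain once both liminf and limsup coincide, so the real work lies in the first and fourth inequalities, and these are symmetric (one follows from the other by replacing $a_k$ with $-a_k$), so it suffices to establish the left-most inequality in detail.

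For the left inequality, I would set $\alpha := \liminf_{k\to\infty} a_k$. The case $\alpha = -\infty$ is vacuous, so assume $\alpha > -\infty$ and fix an arbitrary $c < \alpha$. Then there exists $N$ such that $a_i \ge c$ for every $i \ge N$. I would split
\[
\frac{\sum_{i=1}^k v_i a_i}{\sum_{i=1}^k v_i}
= \frac{\sum_{i=1}^{N-1} v_i a_i}{\sum_{i=1}^k v_i}
+ \frac{\sum_{i=N}^{k} v_i a_i}{\sum_{i=1}^k v_i}.
\]
The first term tends to $0$ as $k\to\infty$ because the numerator is a fixed finite number while the denominator diverges to $+\infty$ by hypothesis $\sum_{i=1}^k v_i \to \infty$. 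For the second term, the bound $a_i \ge c$ on $i \ge N$ combined with $v_i \ge 0$ gives a lower estimate by $c \cdot \sum_{i=N}^{k} v_i / \sum_{i=1}^{k} v_i$, whose limit as $k\to\infty$ is $c$ (again using that the head $\sum_{i=1}^{N-1} v_i$ is negligible compared with $\sum_{i=1}^k v_i \to \infty$). Passing to the liminf in $k$ yields $\liminf_{k\to\infty} \frac{\sum v_i a_i}{\sum v_i} \ge c$, and letting $c \uparrow \alpha$ completes the inequality.

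The right-most inequality is handled analogously: set $\beta := \limsup_{k\to\infty} a_k$, assume $\beta < +\infty$ (otherwise the statement is vacuous), fix $c > \beta$, choose $N$ with $a_i \le c$ for $i \ge N$, and apply exactly the same two-piece decomposition but with upper bounds, concluding by letting $c \downarrow \beta$. Finally, if $\lim_{k\to\infty} a_k = a$, then $\alpha = \beta = a$, so the chain of inequalities collapses to equality, proving the ``in particular'' statement.

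The only mildly delicate points are bookkeeping matters rather than conceptual obstacles: one must ensure the tail-quotient $\sum_{i=N}^{k} v_i / \sum_{i=1}^{k} v_i$ actually tends to $1$ (which follows since $\sum_{i=1}^{N-1} v_i$ is a fixed constant and the denominator diverges), and one must be a bit careful that multiplying an inequality $a_i \ge c$ by $v_i \ge 0$ preserves the sign regardless of whether $c$ is positive or negative. I do not foresee any substantive obstacle; the argument is the standard weighted Stolz--Ces\`{a}ro reasoning adapted to liminf/limsup.
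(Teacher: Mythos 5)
Your proof is correct, and the delicate points you flag (the tail-ratio $\sum_{i=N}^{k}v_i/\sum_{i=1}^{k}v_i\to 1$ and the sign of $c$) are handled properly by passing to the limit rather than crudely bounding the ratio by $1$. The paper itself gives no proof of this lemma---it is quoted verbatim from Kiwiel's Lemma 2.1---so there is nothing to compare against; your argument is the standard weighted Ces\`{a}ro/Stolz reasoning one would expect, with the right inequality correctly reduced to the left via $a_k\mapsto -a_k$.
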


\section{Incremental subgradient methods and convergence analysis}
In this section, we propose the incremental subgradient methods, including the deterministic and randomized styles, to solve problem \eqref{eq-SP}, and investigate their convergence properties when using typical stepsize rules.
We write $f^*$ and $X^*$ to denote the optimal value and the (global) optimal solution set of problem \eqref{eq-SP} respectively, that is,
\begin{equation*}\label{eq-min}
f^*:=\min_{x\in X}\sum_{i=1}^mf_i(x)\quad {\rm and}\quad X^*:={\rm arg}\min_{x\in X} \sum_{i=1}^mf_i(x),
\end{equation*}
and define
\[
f_i^*:=\min_{x\in X}f_i(x)\quad {\rm and}\quad  X_i^*:={\rm arg}\min_{x\in X} f_i(x)\quad \mbox{for } i=1,\dots,m.
\]
To accomplish the convergence analysis, the following two assumptions are made throughout this paper.

\begin{assumption}\label{asp-HF}
The component functions of problem \eqref{eq-SP} have a common optimal solution.
\end{assumption}

\begin{assumption}\label{asp-HC}
Let $p>0$ and $L_i>0$ for $i=1,\dots,m$. For each $i=1,\dots,m$, $f_i$ satisfies the H\"{o}lder condition of order $p$ with modulus $L_i$ on $X$.
\end{assumption}

The H\"{o}lder condition of order $p$ was assumed in \cite{HuEJOR15,HuJNCA16} to develop the convergence theory of several subgradient-type methods for quasi-convex optimization. Assumption \ref{asp-HC} consists of the H\"{o}lder condition of order $p$ for all component functions of problem \eqref{eq-SP}. Furthermore, we write
\begin{equation}\label{eq-lmax}
L_{\max}:=\max_{i=1,\dots,m} L_i.
\end{equation}
\begin{remark}
{\rm
It is easy to check that Assumption \ref{asp-HF} is equivalent to $X^*= \cap_{i=1}^m X_i^*\neq \emptyset$. Assumption \ref{asp-HF} is a homogeneous assumption for the component functions of problem \eqref{eq-SP}, and it also says that $f^*=\sum_{i=1}^m f_i^*$. Shi et al. \cite{Shi09} used Assumption \ref{asp-HF} to explore the convergence properties of a normalized incremental subgradient method for minimizing a sum of convex component functions. Although, under Assumption \ref{asp-HF}, we can approach the optimal value of problem \eqref{eq-SP} via minimizing component functions $f_i$ over $X$ separately, it is still difficult to find a common optimal solution, i.e., an optimal solution of problem \eqref{eq-SP}, which is an essential issue of decision-making problems. In this paper, we propose the incremental subgradient methods to resolve this issue.
}
\end{remark}

Below, we show a class of application problems, i.e., the quasi-convex feasibility problem, that can be formulated as the framework \eqref{eq-SP} and satisfy Assumptions \ref{asp-HF} and \ref{asp-HC}.
\begin{example}
The feasibility problem is at the core of the modeling of many problems in various areas of mathematics and physical sciences; see \cite{Bauschke96,HuSIOPT16} and references therein. In particular, the quasi-convex feasibility problem is an important class of feasibility problems (see \cite{CensorSegal06,GoffinLuoYe94,NimanaQuasiSFP2016}), which is to find a solution  of the following  system of inequalities:
\begin{equation}\label{eq-FP}
x\in X,\quad \mbox{and} \quad h_i(x)\le 0\quad  \mbox{for each }i=1,\dots,m,
\end{equation}
where $h_i:\R^n\to \R$ is quasi-convex and continuous for each $i=1,\dots,m$, and $X$ is nonempty, closed and convex. It is always assumed that the solution set of problem \eqref{eq-FP} is nonempty.
The feasibility problem \eqref{eq-FP} can be cast into the framework of sum-minimization problem \eqref{eq-SP} as the following model:
\begin{equation}\label{eq-FP-r}
\min_{x\in X} f(x):=\sum_{i=1}^m f_i(x),\quad \mbox{where $f_i:=\max\{h_i,0\}$ for each $i=1,\dots,m$}.
\end{equation}
It is clear that $f_i$ is quasi-convex if $h_i$ is quasi-convex for each $i=1,\dots,m$.
It is easy to see that Assumption \ref{asp-HF} is satisfied for problem \eqref{eq-FP-r} if the solution set of problem \eqref{eq-FP} is nonempty.
Assumption \ref{asp-HC} is satisfied for problem \eqref{eq-FP-r} if each function $h_i$ in \eqref{eq-FP} satisfies the H\"{o}lder condition of order $p$.
\end{example}

The stepsize rule has a critical effect on the convergence behavior and computational capacity of subgradient methods. In this paper, we investigate convergence properties of incremental subgradient methods by using the following typical stepsize rules.
\begin{enumerate}
\item[(S1)] \emph{Constant stepsize rule}:
\[v_k\equiv v\,(>0)\quad \mbox{for any } k\in \IN.\]

\item[(S2)] \emph{Diminishing stepsize rule}:
\begin{equation}\label{eq-dim+stepsize}
v_k>0,\quad \lim_{k\to \infty} v_k=0,\quad \sum_{k=0}^{\infty} v_k=\infty.
\end{equation}


\item[(S3)] \emph{Dynamic stepsize rule I}:
\begin{equation}\label{eq-dyn+stepsize}
v_k=\gamma_k \frac{C_{p,m}}{m^2}\left(f(x_k)-f^*\right)^{\frac1p}\quad \mbox{for any } k\in \IN,
\end{equation}
where $0<\underline{\gamma}\le \gamma_k \le \overline{\gamma}<2$ and
\begin{equation}\label{eq-para}
C_{p,m}:=L_{\max}^{-\frac{1}{p}}\min\left\{1,(2m)^{1-\frac1p}\right\}.
\end{equation}

\item[(S4)] \emph{Dynamic stepsize rule II}:
\begin{equation}\label{eq-dyn+stepsize-r}
v_k=\gamma_k \frac{R_{p,m}}{m}\left(f(x_k)-f^*\right)^{\frac1p}\quad \mbox{for any } k\in \IN,
\end{equation}
where $0<\underline{\gamma}\le \gamma_k \le \overline{\gamma}<2$ and
\begin{equation}\label{eq-para-r}
R_{p,m}:=L_{\max}^{-\frac1p}\min\left\{1,m^{1-\frac1p}\right\}.
\end{equation}
\end{enumerate}
Type (S3) is for the deterministic incremental subgradient method, while type (S4) is for the randomized incremental subgradient method.
Note that both types of dynamic stepsize rules are slightly difference from that of the classical incremental subgradient method (see \cite{Nedic01}).

\subsection{Incremental subgradient method}
The aim of this subsection is to propose an incremental subgradient method to solve problem \eqref{eq-SP}, and to study its convergence properties when using several different stepsize rules. The incremental subgradient method is formally described as follows.

\vspace{10pt}
{
\LinesNumbered
\IncMargin{1em}
\begin{algorithm}[H]
  \SetKwData{Left}{left}\SetKwData{This}{this}\SetKwData{Up}{up}
  \SetKwFunction{Union}{Union}\SetKwFunction{FindCompress}{FindCompress}
  \SetKwInOut{Input}{input}\SetKwInOut{Output}{output}
  \BlankLine
  Initialize an initial point $x_0\in \R^n$, a stepsize sequence $\{v_k\}$, and let $k:=0$\;
  \While{$f(x_k)>f^*$}
  {{Let $z_{k,0}:=x_k;$}\\
  \For{$i=1,\dots,m$}{
  {\eIf{$f_{i}(z_{k,i-1})=f_{i}^*$}
  {Let $z_{k,i}:=z_{k,i-1};$}
  {\emph{Calculate $g_{k,i}\in \partial f_i(z_{k,i-1}) \cap S(0,1)$, and let $z_{k,i}:=P_X \left(z_{k,i-1}-v_k g_{k,i}\right)$;}}
  }}
  Let $x_{k+1}:=z_{k,m}$ and $k:=k+1$.}
  \caption{Incremental subgradient method.}\label{alg-IncQS}
\end{algorithm}\
}

\begin{remark}
{\rm
Note that Algorithm \ref{alg-IncQS} is different from the classical incremental subgradient method for convex optimization in \cite{Nedic01} and the incremental gradient method for smooth optimization in \cite{Bertsekas99}. In particular, the classical incremental gradient/subgradient method \eqref{eq-incre} updates subgradient subiterations in a cyclic sequence on $\{1,\dots,m\}$; while Algorithm \ref{alg-IncQS} only updates subgradient subiterations in a cyclic sequence on the index set $\{i:f_{i}(z_{k,i-1})>f_{i}^*\}$, where the minimal value of $f_i$ is not achieved yet.
}
\end{remark}

The following example illustrates that Algorithm \ref{alg-IncQS} may not converge to the optimal value of problem \eqref{eq-SP} if the updated sequence on $\{i:f_{i}(z_{k,i-1})>f_{i}^*\}$ in Algorithm \ref{alg-IncQS} is replaced by a cyclic sequence on $\{1,\dots,m\}$ as in the classical incremental gradient/subgradient method \eqref{eq-incre}, even though Assumptions 1 and 2 are satisfied.

\begin{example}\label{exp-A1}
Consider problem \eqref{eq-SP}, where $X=\R$, $m=2$, and two component functions are
\[
f_1(x):=\max\{x,0\}\quad {\rm and}\quad f_2(x):=\max\{-x,0\}.
\]
Obviously, $f_1^*=f_2^*=0$, $X_1^*=\R_-$ and $X_2^*=\R_+$; $f(x):=f_1(x)+f_2(x)=|x|$, $f^*=0$ and $X^*=\{0\}$. It is easy to see that $X^*= X_1^* \cap X_2^*$, and that $f_1$ and $f_2$ are quasi-convex and satisfy the H\"{o}lder condition of order 1 on $\R$ with $L_{\max}=1$. Hence Assumptions \ref{asp-HF} and \ref{asp-HC} are satisfied.

In this setting, for any $x_0> 0$, one has that $\partial f_1(x_0)=\R_+$, $g_{1,1}=1$ and $z_{1,1}=x_0-v$. Note that
\[\partial f_2(x):=\left\{\begin{matrix}
   \R,&{x\ge 0,}\\ \R_-, &{x< 0.}
\end{matrix}\right.\]
Hence we can choose $g_{1,2}=-1$, and then $z_{1,2}=z_{1,1}+v=x_0$. That is, a fixed sequence is generated, and so $x_k\equiv x_0$ and $\lim_{k\to \infty} f(x_k)=x_0$. Therefore, the generated sequence does not converge to the optimal value/solution of problem \eqref{eq-SP}.
\end{example}

We now start the convergence analysis by providing a basic inequality of Algorithm \ref{alg-IncQS}, which shows a significant property of an incremental subgradient iteration. Recall that $C_{p,m}$ is defined in \eqref{eq-para}.
\begin{lemma}\label{lem-BI}
Suppose Assumptions \ref{asp-HF} and \ref{asp-HC} are satisfied. Let $\{x_k\}$ be a sequence generated by Algorithm \ref{alg-IncQS}.
Then, for any $x^*\in X^*$ and any $k\in \IN$, it holds that
\begin{equation}\label{eq-BI}
\|x_{k+1}-x^*\|^2\le \|x_k-x^*\|^2-2v_kC_{p,m} (f(x_k)-f^*)^{\frac{1}{p}}+m^2v_k^2.
\end{equation}
\end{lemma}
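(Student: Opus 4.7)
The plan is to derive the basic inequality by chaining together the $m$ inner sub-iterations of Algorithm~\ref{alg-IncQS} and then converting the accumulated progress terms $f_i(z_{k,i-1})-f_i^*$ into a single progress term $f(x_k)-f^*$ by means of the H\"older condition and the power-sum estimates of Lemma~\ref{lem-IneQ-HYMOR}. A preliminary observation that will be used throughout is that, by Assumption~\ref{asp-HF}, any $x^*\in X^*$ lies in every $X_i^*$, so $f_i(x^*)=f_i^*$ for each $i$; this is what allows Lemma~\ref{lem-HC} to be applied to each component at the common minimizer $x^*$.

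First I would analyze a single sub-iteration. Fix $k$ and $i$. When $f_i(z_{k,i-1})>f_i^*$, so that a step is actually taken, nonexpansiveness of $P_X$ together with $x^*\in X$ and $\|g_{k,i}\|=1$ yields
\[
\|z_{k,i}-x^*\|^2 \le \|z_{k,i-1}-x^*\|^2 - 2v_k\langle g_{k,i}, z_{k,i-1}-x^*\rangle + v_k^2,
\]
and Lemma~\ref{lem-HC} applied to $f_i$ converts the inner product into $\langle g_{k,i}, z_{k,i-1}-x^*\rangle \ge L_i^{-1/p}(f_i(z_{k,i-1})-f_i^*)^{1/p}$. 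In the other branch ($z_{k,i}=z_{k,i-1}$) the same inequality is trivial, since both the subtracted term and the move vanish. Telescoping over $i=1,\dots,m$ with $z_{k,0}=x_k$ and $x_{k+1}=z_{k,m}$ gives
\[
\|x_{k+1}-x^*\|^2 \le \|x_k-x^*\|^2 - 2v_k\sum_{i=1}^m L_i^{-1/p}(f_i(z_{k,i-1})-f_i^*)^{1/p} + mv_k^2.
\]

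The main work is to lower-bound this sum by $C_{p,m}(f(x_k)-f^*)^{1/p}$ up to a controllable error. The key geometric observation is that each sub-step moves the iterate by at most $v_k$ (unit-norm quasi-subgradient, nonexpansive projection, or no move), so $\|z_{k,i-1}-x_k\|\le(i-1)v_k$. Assumption~\ref{asp-HC} then gives $f_i(x_k)-f_i^* \le (f_i(z_{k,i-1})-f_i^*) + L_i(i-1)^p v_k^p$. I then split on $p$: for $p\ge 1$, Lemma~\ref{lem-IneQ-HYMOR}(i) with $n=2$ and $\gamma=1/p\le 1$ gives $(u+w)^{1/p}\le u^{1/p}+w^{1/p}$; for $0<p<1$, Lemma~\ref{lem-IneQ-HYMOR}(ii) with $n=2$ and $\gamma=1/p\ge 1$ gives $(u+w)^{1/p}\le 2^{1/p-1}(u^{1/p}+w^{1/p})$. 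In either case, after rearrangement,
\[
L_i^{-1/p}(f_i(z_{k,i-1})-f_i^*)^{1/p} \ge \kappa\, L_i^{-1/p}(f_i(x_k)-f_i^*)^{1/p} - (i-1)v_k,
\]
with $\kappa=1$ when $p\ge 1$ and $\kappa=2^{1-1/p}$ when $p<1$.

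Finally I would sum over $i$, use $L_i^{-1/p}\ge L_{\max}^{-1/p}$ and $\sum_{i=1}^m(i-1)=m(m-1)/2$, and apply Lemma~\ref{lem-IneQ-HYMOR} a second time (now with $n=m$ and the same $\gamma=1/p$) to pass from $\sum_i(f_i(x_k)-f_i^*)^{1/p}$ to $(f(x_k)-f^*)^{1/p}$: the passage costs the factor $1$ when $p\ge 1$ and the factor $m^{1-1/p}$ when $p<1$. Multiplying the two factors in the two regimes yields exactly $\min\{1,(2m)^{1-1/p}\}$, so the entire sum is bounded below by $C_{p,m}(f(x_k)-f^*)^{1/p}-v_k m(m-1)/2$. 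Plugging this back produces an extra $2v_k\cdot v_k m(m-1)/2=m(m-1)v_k^2$, and combining with the $mv_k^2$ from the projection step gives $m(m-1)v_k^2+mv_k^2=m^2v_k^2$, which is the desired bound. The delicate part is precisely this last bookkeeping: aligning the two case-splits of Lemma~\ref{lem-IneQ-HYMOR} (once with $n=2$, once with $n=m$, both at the same $\gamma=1/p$) so the constant collapses to $(2m)^{1-1/p}$ rather than something looser, together with the clean arithmetic identity $m(m-1)+m=m^2$ that makes the $v_k^2$ coefficient match exactly.
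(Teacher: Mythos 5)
Your proposal is correct and follows essentially the same route as the paper's proof: a per-subiteration descent inequality via nonexpansiveness of $P_X$ and Lemma \ref{lem-HC}, the drift bound $\|z_{k,i-1}-x_k\|\le (i-1)v_k$ combined with the H\"older condition to pass from $f_i(z_{k,i-1})-f_i^*$ to $f_i(x_k)-f_i^*$, and two applications of Lemma \ref{lem-IneQ-HYMOR} (with $n=2$ and $n=m$) whose constants multiply to give $C_{p,m}$. The only difference is bookkeeping: the paper absorbs the drift error into a $(2i-1)v_k^2$ term per subiteration and sums $\sum_{i=1}^m(2i-1)=m^2$, whereas you collect $mv_k^2$ from the telescoping and $m(m-1)v_k^2$ from $\sum_{i=1}^m(i-1)$ separately — arithmetically identical.
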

\begin{proof}
We first show that the following inequality holds for any $x^*\in X^*$, any $k\in \IN$ and $i=1,\dots,m$.
\begin{equation}\label{eq-lem-BI-1}
\|z_{k,i}-x^*\|^2 \le \|z_{k,i-1}-x^*\|^2-2v_k L_{\max}^{-\frac{1}{p}}\left(f_i(z_{k,i-1})-f_i^*\right)^{\frac{1}{p}}+ v_k^2.
\end{equation}
In view of Algorithm \ref{alg-IncQS}, if $f_{i}(z_{k,i-1})=f_{i}^*$, then it is updated that $z_{k,i}=z_{k,i-1}$, and so \eqref{eq-lem-BI-1} automatically holds; otherwise, $z_{k,i-1}\notin X_i^*$, and then one sees from Algorithm \ref{alg-IncQS} that
\begin{equation}\label{eq-lem-BI-1b}
z_{k,i}=P_X \left(z_{k,i-1}-v_k g_{k,i}\right).
\end{equation}
Note by Assumption \ref{asp-HF} that $x^*\in X^*= \cap_{i=1}^m X_i^*$, and so $x^*\in X_i^*$ for $i=1,\dots,m$. By the assumption that $z_{k,i-1}\notin X_i^*$, one has that $f(z_{k,i-1})> f_i^*$.
Then Lemma \ref{lem-HC} is applicable (with $f_i$, $z_{k,i-1}$, $X_i^*$ in place of $h$, $x$, $X^*$) to concluding that
\begin{equation}\label{eq-lem-BI-0a}
f_i(z_{k,i-1})-f_i^*=f_i(z_{k,i-1})-f_i(x^*)\le L_i\langle g_{k,i}, z_{k,i-1}-x^*\rangle^p\le L_{\max}\langle g_{k,i}, z_{k,i-1}-x^*\rangle^p
\end{equation}
(due to \eqref{eq-lmax}). By the nonexpansive property of the projection operator, it follows from \eqref{eq-lem-BI-1b} that
\begin{equation*}
\begin{array}{lll}
\|z_{k,i}-x^*\|^2&\le \left\|z_{k,i-1}-v_k g_{k,i}-x^*\right\|^2 \\
&= \|z_{k,i-1}-x^*\|^2-2v_k \left\langle g_{k,i}, z_{k,i-1}-x^*\right\rangle+ v_k^2\\
&\le \|z_{k,i-1}-x^*\|^2-2v_k L_{\max}^{-\frac{1}{p}}\left(f_i(z_{k,i-1})-f_i^*\right)^{\frac{1}{p}}+ v_k^2,
\end{array}
\end{equation*}
where the last inequality follows from \eqref{eq-lem-BI-0a}. Hence \eqref{eq-lem-BI-1} is proved.

Next, we estimate the second term in the right hand side of \eqref{eq-lem-BI-1} in terms of $f_i(x_k)-f_i^*$.
By Lemma \ref{lem-IneQ-HYMOR} (with $f_i(z_{k,i-1})-f_i^*$, $|f_i(z_{k,i-1})-f_i(x_k)|$, $\frac{1}{p}$ in place of $a_1$, $a_2$, $\gamma$), one has that
\begin{equation*}\label{eq-lem-BI-1a}
\begin{array}{llll}
(f_i(x_k)-f_i^*)^{\frac{1}{p}}&\le \left((f_i(z_{k,i-1})-f_i^*)+|f_i(z_{k,i-1})-f_i(x_k)|\right)^{\frac{1}{p}}\\
&\le \max\left\{1,2^{\frac{1}{p}-1}\right\}\left((f_i(z_{k,i-1})-f_i^*)^{\frac{1}{p}}+|f_i(z_{k,i-1})-f_i(x_k)|^{\frac{1}{p}}\right).
\end{array}
\end{equation*}
Denoting
\begin{equation}\label{eq-lem-BI-1c}
C_p:=\left(\max\left\{1,2^{\frac{1}{p}-1}\right\}\right)^{-1}=\min\left\{1,2^{1-\frac{1}{p}}\right\},
\end{equation}
the above inequality is reduced to
\begin{equation}\label{eq-lem-BI-2}
(f_i(z_{k,i-1})-f_i^*)^{\frac{1}{p}}\ge C_p(f_i(x_k)-f_i^*)^{\frac{1}{p}}-|f_i(z_{k,i-1})-f_i(x_k)|^{\frac{1}{p}}.
\end{equation}
By Assumption \ref{asp-HC} (cf. \eqref{eq-HC}) and in view of Algorithm \ref{alg-IncQS},
it follows that
\begin{equation*}
|f_i(z_{k,i-1})-f_i(x_k)|\le L_i\|z_{k,i-1}-x_k\|^p\le L_{\max}\left(\sum_{j=1}^{i-1}\|z_{k,j}-z_{k,j-1}\|\right)^p\le L_{\max}\left(v_k(i-1)\right)^p.
\end{equation*}
Hence \eqref{eq-lem-BI-2} is reduced to
\[
(f_i(z_{k,i-1})-f_i^*)^{\frac{1}{p}}\ge C_p(f_i(x_k)-f_i^*)^{\frac{1}{p}}-L_{\max}^\frac1pv_k(i-1),
\]
and so \eqref{eq-lem-BI-1} yields that
\begin{equation}\label{eq-lem-BI-3}
\|z_{k,i}-x^*\|^2\le \|z_{k,i-1}-x^*\|^2-2v_k L_{\max}^{-\frac{1}{p}}C_p\left(f_i(x_k)-f_i^*\right)^{\frac{1}{p}}+(2i-1)v_k^2.
\end{equation}

Finally, summing \eqref{eq-lem-BI-3} over $i=1,\dots,m$, we obtain that
\begin{equation}\label{eq-lem-BI-4}
\|x_{k+1}-x^*\|^2\le \|x_k-x^*\|^2-2v_k L_{\max}^{-\frac{1}{p}}C_p \sum_{i=1}^m\left(f_i(x_k)-f_i^*\right)^{\frac{1}{p}}+m^2v_k^2.
\end{equation}
Note by Lemma \ref{lem-IneQ-HYMOR} (with $f_i(x_k)-f_i^*$ and $\frac{1}{p}$ in place of $a_i$ and $\gamma$) that
\[
\sum_{i=1}^m(f_i(x_k)-f_i^*)^{\frac{1}{p}}
\ge \min\left\{1,m^{1-\frac{1}{p}}\right\}\left(\sum_{i=1}^m (f_i(x_k)-f_i^*)\right)^{\frac{1}{p}}
=\min\left\{1,m^{1-\frac{1}{p}}\right\}(f(x_k)-f^*)^{\frac{1}{p}}
\]
(thanks to Assumption 1), and by \eqref{eq-lem-BI-1c} that
\[
L_{\max}^{-\frac{1}{p}}C_p\min\left\{1,m^{1-\frac{1}{p}}\right\}=
L_{\max}^{-\frac{1}{p}}\min\left\{1,2^{1-\frac{1}{p}}\right\}\,\min\left\{1,m^{1-\frac{1}{p}}\right\}
=C_{p,m}
\]
(cf. \eqref{eq-para}). Therefore, \eqref{eq-BI} is seen to hold by \eqref{eq-lem-BI-4}. The proof is complete.
\end{proof}

\begin{remark}
In Algorithm \ref{alg-IncQS}, the subgradient subiteration is processed in an ordered cyclic sequence on the index set $\{i:f_{i}(z_{k,i-1})>f_{i}^*\}$.
It is worthy mentioning that the proof of Lemma \ref{lem-BI}, as well as the convergence analysis of Algorithm \ref{alg-IncQS}, still work if any order of  component functions is assumed, as long as each component on $\{i:f_{i}(z_{k,i-1})>f_{i}^*\}$ is taken into account exactly once within a cycle.
Hence, in applications, we could reorder the components $f_i$ by either shifting or reshuffling at the beginning of each cycle, and then proceed with the calculations until the end of this cycle.
\end{remark}

By virtue of Lemma \ref{lem-BI}, we establish the convergence results of the incremental subgradient method when using different stepsize rules in Theorems \ref{thm-const}-\ref{thm-dyn}, respectively.

\begin{theorem}\label{thm-const}
Suppose Assumptions \ref{asp-HF} and \ref{asp-HC} are satisfied. Let $\{x_k\}$ be a sequence generated by Algorithm \ref{alg-IncQS} with the constant stepsize rule {\rm (S1)}. Then we have
\begin{equation}\label{eq-thm-cons}
\liminf_{k\to \infty} f(x_k)\le f^*+\left(\frac{m^2v}{2C_{p,m}}\right)^p.
\end{equation}
\end{theorem}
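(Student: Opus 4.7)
The plan is to argue by contradiction using the basic inequality \eqref{eq-BI} from Lemma \ref{lem-BI}. First, observe that if the \texttt{while}-loop in Algorithm \ref{alg-IncQS} ever terminates, i.e., $f(x_k)=f^*$ for some $k$, then the conclusion holds trivially, so I assume the loop runs forever and the iterates $\{x_k\}$ form an infinite sequence.

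Suppose, for the sake of contradiction, that $\liminf_{k\to\infty} f(x_k) > f^* + \left(\frac{m^2 v}{2 C_{p,m}}\right)^p$. Then I can choose a small $\varepsilon>0$ and an index $k_0$ such that
\[
f(x_k) - f^* \;\ge\; \left(\frac{m^2 v}{2 C_{p,m}}\right)^p + \varepsilon \qquad \text{for all } k \ge k_0.
\]
Since the function $t\mapsto t^{1/p}$ is strictly increasing on $[0,\infty)$, this yields a $\delta>0$ (depending on $\varepsilon$, $p$, $m$, $v$, $C_{p,m}$) with
\[
(f(x_k) - f^*)^{1/p} \;\ge\; \frac{m^2 v}{2 C_{p,m}} + \delta \qquad \text{for all } k \ge k_0.
\]

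Now I substitute this bound and the constant stepsize $v_k\equiv v$ into the basic inequality \eqref{eq-BI}, which causes the two $m^2 v^2$-type terms to cancel precisely and leaves a strict decrement:
\[
\|x_{k+1}-x^*\|^2 \;\le\; \|x_k-x^*\|^2 - 2 v C_{p,m}\left(\frac{m^2 v}{2 C_{p,m}} + \delta\right) + m^2 v^2
= \|x_k-x^*\|^2 - 2 v C_{p,m}\,\delta.
\]
Iterating this bound from $k_0$ up to some $k > k_0$ and using nonnegativity of the norm gives
\[
0 \;\le\; \|x_{k+1}-x^*\|^2 \;\le\; \|x_{k_0}-x^*\|^2 - 2 v C_{p,m}\,\delta\,(k+1-k_0),
\]
whose right-hand side becomes negative once $k$ is large enough, a contradiction. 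Hence \eqref{eq-thm-cons} holds.

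The argument is essentially routine once Lemma \ref{lem-BI} is in hand; the only mildly delicate step is the conversion from ``$f(x_k)-f^*$ bounded away from $(m^2 v/(2 C_{p,m}))^p$'' to ``$(f(x_k)-f^*)^{1/p}$ bounded away from $m^2 v/(2 C_{p,m})$,'' which must be done carefully for general $p>0$ (in particular for both $p\le 1$ and $p\ge 1$), but follows immediately from strict monotonicity of the $p$-th root. No appeal to Lemma \ref{lem-averaging} or Lemma \ref{lem-IneQ-HYMOR} is needed here; those will become relevant for the diminishing and dynamic stepsize theorems that follow.
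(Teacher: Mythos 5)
Your proof is correct and follows essentially the same route as the paper: a contradiction argument that feeds the liminf assumption into the basic inequality of Lemma \ref{lem-BI} with constant stepsize, obtains a uniform per-step decrease of $2vC_{p,m}\delta$, and sums to contradict nonnegativity of $\|x_k-x^*\|^2$. The only cosmetic difference is that the paper packages the margin as $f(x_k)>f^*+\bigl(\tfrac{m^2v}{2C_{p,m}}+\delta\bigr)^p$ directly, whereas you extract $\delta$ via monotonicity of the $p$-th root; both are equivalent.
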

\begin{proof}
We prove by contradiction, assuming that
\begin{equation*}
\liminf_{k\to \infty} f(x_k)> f^*+\left(\frac{m^2v}{2C_{p,m}}\right)^p.
\end{equation*}
Consequently, there exist $\delta>0$ and $N\in \IN$ such that
\begin{equation*}
f(x_k)> f^*+\left(\frac{m^2v}{2C_{p,m}}+\delta\right)^p\quad \mbox{for any } k\ge N.
\end{equation*}
Therefore, it follows from Lemma \ref{lem-BI} that for any $k\ge N$
\begin{equation*}\label{eq-thm-cons-1}
\|x_{k+1}-x^*\|^2\le \|x_k-x^*\|^2-2vC_{p,m}(f(x_k)-f^*)^{\frac{1}{p}}+m^2v^2
 < \|x_k-x^*\|^2-2v\delta C_{p,m}.
\end{equation*}
Summing the above inequality over $k=N,\dots,t-1$, we have
\[
\|x_{t}-x^*\|^2<\|x_{N}-x^*\|^2-2(t-N)v\delta C_{p,m},
\]
which yields a contradiction for a sufficiently large $t$. The proof is complete.
\end{proof}

\begin{remark}
{\rm
Theorem \ref{thm-const} shows the convergence of Algorithm \ref{alg-IncQS} to the optimal value of problem \eqref{eq-SP} within a tolerance when the constant stepsize rule is adopted.
The tolerance in \eqref{eq-thm-cons} is given in terms of the stepsize and circumstances of problem \eqref{eq-SP}, including the number of component functions and parameters of H${\rm \ddot{o}}$lder conditions.
In particular, when $m=1$, problem \eqref{eq-SP} is reduced to a constrained quasi-convex optimization problem, and then the convergence result described in Theorem \ref{thm-const} is reduced to that of \cite[Theorem 3.1]{HuEJOR15} (when noise and error are vanished).
When each component function in problem \eqref{eq-SP} is convex, the H${\rm \ddot{o}}$lder condition ($p=1$) is equivalent to the bounded subgradient assumption, and then the convergence result described in Theorem \ref{thm-const} is reduced to that of \cite[Proposition 2.1]{Nedic01}.
}
\end{remark}

\begin{theorem}\label{thm-dimi}
Suppose Assumptions \ref{asp-HF} and \ref{asp-HC} are satisfied. Let $\{x_k\}$ be a sequence generated by Algorithm \ref{alg-IncQS} with the diminishing stepsize rule {\rm (S2)}. Then the following statements hold.
\begin{enumerate}[{\rm (i)}]
  \item $\liminf_{k\to \infty} f(x_k)= f^*$.
  \item If $f$ is coercive, then $\lim_{k\to \infty} f(x_k)= f^*$ and $\lim_{k\to \infty} \dist(x_k,X^*)=0$.
  \item If $\sum_{k=0}^\infty v_k^2<\infty$, then $\{x_k\}$ converges to an optimal solution of problem \eqref{eq-SP}.
\end{enumerate}
\end{theorem}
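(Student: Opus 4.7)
The plan is to treat all three parts as consequences of Lemma~\ref{lem-BI}: fix $x^*\in X^*$, set $d_k:=\|x_k-x^*\|^2$, and work from the basic inequality
\[
d_{k+1}\le d_k-2v_kC_{p,m}(f(x_k)-f^*)^{1/p}+m^2v_k^2.
\]
For (i), I would argue by contradiction. If $\liminf_k f(x_k)>f^*$, pick $\delta>0$ so that $(f(x_k)-f^*)^{1/p}\ge\delta$ for all sufficiently large $k$. Since $v_k\to 0$ under (S2), for $k$ large one has $m^2v_k\le C_{p,m}\delta$, so the error $m^2v_k^2$ is absorbed into half of the negative term, leaving $d_{k+1}\le d_k-v_kC_{p,m}\delta$. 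Telescoping and invoking $\sum v_k=\infty$ drives $d_k$ below zero, contradicting $d_k\ge 0$.

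\smallskip

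\noindent
For (ii), the first step is to show $\{x_k\}$ is bounded. Coercivity gives some $M_0<\infty$ with $\{x\in X:f(x)\le f^*+1\}\subset B(x^*,M_0)$, so whenever $\|x_k-x^*\|>M_0$ one has $f(x_k)-f^*>1$, and the basic inequality (for $k$ large with $v_k$ small) yields strict decrease $d_{k+1}<d_k$; inside the ball the trivial bound $d_{k+1}\le d_k+m^2v_k^2$ suffices. Combined, $\{d_k\}$ stays bounded, so $\{x_k\}$ lies in some bounded closed set $K\subset\R^n$. By Assumption~\ref{asp-HC}, $f$ is uniformly $p$-H\"older continuous on $K\cap X$, and since $\|x_{k+1}-x_k\|\le mv_k\to 0$, this gives $|f(x_{k+1})-f(x_k)|\to 0$. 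The crucial step is then to upgrade $\liminf_k f(x_k)=f^*$ from (i) to $\lim_k f(x_k)=f^*$ via an excursion argument: assuming $\limsup_k f(x_k)\ge f^*+3\eta$ for some $\eta>0$ and using the slow variation of $f(x_k)$, one identifies infinitely many ``excursions'' $[\tau_s,\tau_e]$ on which $f(x_k)\ge f^*+\eta$ throughout, and uniform continuity forces $\|x_{\tau_e}-x_{\tau_s+1}\|\ge\delta_0$, hence $\sum_{\tau_s<k\le\tau_e}v_k\ge\delta_0/m$. Summing Lemma~\ref{lem-BI} across each excursion and absorbing $m^2v_k^2$ into the negative term (possible once $v_k$ is small) yields a uniform per-excursion drop $d_{\tau_e}\le d_{\tau_s}-\rho$ for some $\rho>0$; reconciling this with the boundedness of $\{d_k\}$ and the fact that $x_{\tau_s}\in\{f\le f^*+\eta\}$ (a bounded set) produces the required contradiction. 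Given $\lim_k f(x_k)=f^*$, $\dist(x_k,X^*)\to 0$ follows at once: a subsequence staying $\epsilon$-away from $X^*$ would, by boundedness, admit a convergent sub-subsequence with limit $\bar y$ satisfying $\dist(\bar y,X^*)\ge\epsilon$, hence $f(\bar y)>f^*$ by continuity, contradicting $\lim_k f(x_k)=f^*$.

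\smallskip

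\noindent
For (iii), the extra summability $\sum v_k^2<\infty$ turns Lemma~\ref{lem-BI} into a quasi-Fej\'er estimate $d_{k+1}\le d_k+m^2v_k^2$ with summable perturbation, so by a standard Polyak-type lemma $\|x_k-x^*\|^2$ converges for every $x^*\in X^*$; in particular $\{x_k\}$ is bounded. Telescoping the full inequality and using $d_N\ge 0$ gives
\[
\sum_{k=0}^{\infty} v_k(f(x_k)-f^*)^{1/p}\le \frac{d_0+m^2\sum_{k=0}^\infty v_k^2}{2C_{p,m}}<\infty,
\]
and combined with $\sum v_k=\infty$ this forces $\liminf_k(f(x_k)-f^*)^{1/p}=0$. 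Extract a subsequence $x_{k_j}\to\bar x$ along which $f(x_{k_j})\to f^*$; continuity gives $\bar x\in X^*$. Applying the quasi-Fej\'er convergence with $x^*=\bar x$, the limit of $\|x_k-\bar x\|^2$ coincides with its subsequential limit $0$, so $x_k\to\bar x\in X^*$.

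\smallskip

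\noindent
The hardest part will be the excursion argument in (ii): unlike (iii), we lack $\sum v_k^2<\infty$ to directly invoke quasi-Fej\'er monotonicity, so the per-excursion drop $\rho$ must be extracted entirely from the interplay of Lemma~\ref{lem-BI} with the slow-motion bound $\|x_{k+1}-x_k\|\le mv_k$ and the uniform continuity of $f$ on the bounded iterate set. The delicate balance is that $v_k\to 0$ must be fast enough to absorb $m^2v_k^2$ into $2v_kC_{p,m}\eta^{1/p}$ uniformly along every excursion, while $\sum v_k$ across each excursion remains uniformly bounded below by a quantity depending only on $\eta$ and the modulus of uniform continuity of $f$ on $K\cap X$.
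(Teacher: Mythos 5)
Your parts (i) and (iii) are sound: (i) replaces the paper's appeal to the averaging Lemma~\ref{lem-averaging} by a direct contradiction-and-telescoping argument, and (iii) is essentially the paper's quasi-Fej\'er argument. The gap is in part (ii). The excursion argument you sketch does not close. What you actually obtain is a per-excursion drop $d_{\tau_e}\le d_{\tau_s+1}-\rho$ for a fixed $\rho>0$, together with the bound $d_{\tau_s+1}\le D_\eta+o(1)$ coming from $x_{\tau_s}$ lying in the compact sublevel set $\{f\le f^*+\eta\}\cap X$. Neither of these conflicts with anything: between excursions the basic inequality only gives $d_{k+1}\le d_k+m^2v_k^2$, and under (S2) alone $\sum_k v_k^2$ may diverge, so $d_k$ is free to drift back up toward $D_\eta$ before the next excursion begins. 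A bounded sequence that drops by $\rho$ infinitely often and recovers in between is perfectly consistent, so ``reconciling with the boundedness of $\{d_k\}$'' produces no contradiction with $\limsup_k f(x_k)\ge f^*+3\eta$. The delicate balance you flag at the end is real, and it cannot be resolved along these lines without the square-summability that you are only given in (iii).

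The paper closes (ii) by reversing your order of deduction: it controls $\dist(x_k,X^*)$ first and obtains $\lim_k f(x_k)=f^*$ from continuity afterwards. The key point is that the Fej\'er-type decrease in Lemma~\ref{lem-BI} holds for \emph{every} $x^*\in X^*$, in particular for the projection of $x_k$ onto $X^*$; hence, once $v_k\le\frac{1}{m^2}C_{p,m}\sigma^{1/p}$, one has the implication $f(x_k)>f^*+\sigma\Rightarrow\dist(x_{k+1},X^*)\le\dist(x_k,X^*)$, while $f(x_k)\le f^*+\sigma$ forces $\dist(x_k,X^*)\le\rho(\sigma):=\max\{\dist(x,X^*):x\in X\cap{\rm lev}_{\le f^*+\sigma}f\}$ and therefore $\dist(x_{k+1},X^*)\le\rho(\sigma)+mv_k$. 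Combining the two implications and using part (i) to enter the sublevel set once, one gets $\dist(x_k,X^*)\le\rho(\sigma)+\frac1m C_{p,m}\sigma^{1/p}$ for all large $k$, and $\rho(\sigma)\to0$ as $\sigma\to0$ by coercivity and continuity. This ``if you are far you move closer, if you are close you stay close'' dichotomy applied to $\dist(\cdot,X^*)$ is the missing idea; replacing the excursion argument by it would repair your proof.
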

\begin{proof} 
(i) Fix $x^*\in X^*$. Summing \eqref{eq-BI} over $k=0,1,\dots,n-1$, we have
\begin{equation}\label{eq-thm-dimi-0}
\|x_n-x^*\|^2\le \|x_0-x^*\|^2-2C_{p,m} \sum_{k=0}^{n-1} v_k(f(x_k)-f^*)^{\frac{1}{p}}+m^2\sum_{k=0}^{n-1}v_k^2,
\end{equation}
and thus
\begin{equation}\label{eq-thm-dimi-0a}
\frac{\sum_{k=0}^{n-1} v_k(f(x_k)-f^*)^{\frac{1}{p}}}{\sum_{k=0}^{n-1}v_k}\le \frac{\|x_0-x^*\|^2}{2C_{p,m}\sum_{k=0}^{n-1}v_k}+\frac{m^2\sum_{k=0}^{n-1}v_k^2}{2C_{p,m}\sum_{k=0}^{n-1}v_k}.
\end{equation}
Note by \eqref{eq-dim+stepsize} that
\begin{equation}\label{eq-thm-dimi-0b}
\lim_{n\to \infty} \frac{\|x_0-x^*\|^2}{\sum_{k=0}^{n-1}v_k}=0,
\end{equation}
and by Lemma \ref{lem-averaging} (with $v_k$ in place of $a_k$) that
\begin{equation}\label{eq-thm-dimi-0c}
\lim_{n\to \infty} \frac{\sum_{k=0}^{n-1}v_k^2}{\sum_{k=0}^{n-1}v_k}= \lim_{n\to \infty} v_n =0.
\end{equation}
Then, by Lemma \ref{lem-averaging} (with $(f(x_k)-f^*)^{\frac{1}{p}}$ in place of $a_k$), \eqref{eq-thm-dimi-0a} implies that
\[
\liminf_{n\to \infty}\, (f(x_k)-f^*)^{\frac{1}{p}}\le \liminf_{n\to \infty} \frac{\sum_{k=0}^{n-1}v_k(f(x_k)-f^*)^{\frac{1}{p}}}{\sum_{k=0}^{n-1}v_k}
\le 0.
\]
This shows the desired assertion.

(ii) Fix $\sigma>0$. Since $\{v_k\}$ is diminishing, there exists $N\in \IN$ be such that
\begin{equation}\label{eq-dimi-1}
v_k\le \frac1{m^2}C_{p,m}\sigma^{\frac1p}\quad \mbox{for any } k\ge N.
\end{equation}
Define
\begin{equation}\label{eq-abs-dimi-b}
X_\sigma:=X\cap {\rm lev}_{\le f^*+\sigma}f\quad {\rm and}\quad \rho(\sigma):=\max_{x\in X_\sigma}\dist(x,X^*).
\end{equation}
By the assumption that $f$ is coercive, it follows that its sublevel set ${\rm lev}_{\le f_*+\sigma}f$ is bounded, and so is $X_\sigma$. Thus, by \eqref{eq-abs-dimi-b}, one has $ \rho(\sigma)<\infty$.
Fix $k\ge N$. We show
\begin{equation}\label{eq-imply}
\dist(x_{k+1},X^*)\le \max\{\dist(x_{k},X^*), \rho(\sigma)+\frac1{m}C_{p,m}\sigma^{\frac1p}\}
\end{equation}
by claiming the following two implications:
\begin{equation}\label{eq-imply-1}
[f(x_k)>f_*+\sigma]\quad \Rightarrow \quad [\dist(x_{k+1},X^*)\le \dist(x_{k},X^*)];
\end{equation}
\begin{equation}\label{eq-imply-2}
[f(x_k)\le f_*+\sigma]\quad \Rightarrow \quad [\dist(x_{k+1},X^*)\le \rho(\sigma)+\frac1{m}C_{p,m}\sigma^{\frac1p}].
\end{equation}
To prove \eqref{eq-imply-1}, we suppose that $f(x_k)>f_*+\sigma$. Then $x_k\notin X^*$, and so Lemma \ref{lem-BI} is applicable to concluding that, for any $x^*\in X^*$,
\[
\|x_{k+1}-x^*\|^2\le \|x_k-x^*\|^2-2C_{p,m}v_k \sigma^{\frac{1}{p}}+m^2v_k^2\le \|x_k-x^*\|^2-m^2v_k^2
\]
(due to \eqref{eq-dimi-1}). Consequently, one can prove \eqref{eq-imply-1} by selecting $x^*:=P_{X^*}(x_{k+1})$. To show \eqref{eq-imply-2}, we suppose that $f(x_k)\le f_*+\sigma$. Then we conclude that $x_k\in X_\sigma$, and so, \eqref{eq-abs-dimi-b} says that $\dist(x_k,X^*)\le \rho(\sigma)$. In view of Algorithm \ref{alg-IncQS}, for any $x^*\in X^*$, we obtain
\[
\|x_{k+1}-x^*\|\le \|x_{k}-x^*\|+\sum_{i=1}^m\|z_{k,i}-z_{k,i-1}\|\le \|x_{k}-x^*\|+v_km,
\]
and thus
\begin{equation*}\label{eq-thm-dimi-2}
\dist(x_{k+1},X^*)\le \dist(x_k,X^*) +v_km\le \rho(\sigma)+v_km.
\end{equation*}
This, together with \eqref{eq-dimi-1}, shows \eqref{eq-imply-2}. Therefore, \eqref{eq-imply} is proved as desired.

By assertion (i), we can assume, without loss of generality, that $f(x_N)\le f_*+\sigma$ (otherwise, we can choose a larger $N$); consequently, one has by \eqref{eq-imply-2} that $\dist(x_{N+1},X^*)\le \rho(\sigma)+\frac1{m}C_{p,m}\sigma^{\frac1p}$. Then, we inductively obtain by \eqref{eq-imply} that
\begin{equation}\label{eq-thm-dimi-2a}
\dist(x_{k},X^*)\le \rho(\sigma)+\frac1{m}C_{p,m}\sigma^{\frac1p}\quad \mbox{for any } k> N.
\end{equation}
Since $f$ is continuous and coercive, its sublevel sets are compact, and so, it is trivial to see that $\lim_{\sigma\to 0}\rho(\sigma)=0$.
Hence we obtain by \eqref{eq-thm-dimi-2a} that $\lim_{k\to \infty}\dist(x_{k},X^*)=0$, and thus $\lim_{k\to \infty} f(x_k)= f_*$ (by the continuity of $f$).

(iii) By the assumption that $\sum_{k=0}^\infty v_k^2<\infty$, one sees from \eqref{eq-thm-dimi-0} that $\{\|x_k-x^*\|\}$ is bounded, and so is $\{x_k\}$. Since further it was proved in assertion (i) of this theorem that $\liminf_{k\to \infty} f(x_k)= f^*$, it follows that $\{x_k\}$ has at least a cluster point falling in $X^*$, assumed as $\bar{x}\in X^*$. Noting that $\lim_{n\to \infty}\sum_{k=n}^\infty v_k^2=0$, we obtain by \eqref{eq-BI} (with $\bar{x}$ in place of $x^*$) that $\{\|x_k-\bar{x}\|^2\}$ is a Cauchy sequence, and thus it converges to 0. Hence $\{x_k\}$ converges to $\bar{x}\,(\in X^*)$. The proof is complete.
\end{proof}

It was reported in \cite{HuEJOR15,HuJNCA16} that the H${\rm \ddot{o}}$lder condition of order $p$ (i.e., Assumption \ref{asp-HC}) is essential for the convergence behavior of subgradient-type methods for quasi-convex optimization. The following example illustrates that Assumption \ref{asp-HF} is also essential for the validity of the established convergence theorems.
\begin{example}\label{exp-HC}
Consider problem \eqref{eq-SP}, where $X=\R$, $m=2$, and two component functions are
\[
f_1(x):=\max\{x+2,0\}\quad {\rm and}\quad f_2(x):=\max\{-2x+2,0\}.
\]
Obviously, $f_1^*=f_2^*=0$, $X_1^*=(-\infty,-2]$ and $X_2^*=[1,+\infty)$; $f^*=3$ and $X^*=\{1\}$. Clearly, one sees that $X^*\neq X_1^*\cap X_2^*$, and so Assumption \ref{asp-HF} is not satisfied. It is easy to verify that $f_1$ and $f_2$ are quasi-convex and satisfy the H\"{o}lder condition of order 1 on $\R$ with $L_{\max}=2$, and so Assumption \ref{asp-HC} is satisfied.

Starting from $x_0=0$, we apply Algorithm \ref{alg-IncQS} to solve problem \eqref{eq-SP}. We claim that the generated sequence may not converge to the optimal value/solution of problem \eqref{eq-SP} for any stepsize. Indeed, in this setting, one has that $\partial f_1(0)=\R_+$, $g_{1,1}=1$ and $z_{1,1}=-v<0$, and then $\partial f_2(z_{k,1})=\R_-$, $g_{1,2}=-1$ and $z_{1,2}=z_{1,1}+v=0$.
Consequently, a fixed sequence is generated, and so $x_k\equiv 0$ and $\lim_{k\to \infty} f(x_k)=4$. Hence, Theorem \ref{thm-const} fails whenever $v<\frac14$, and Theorem \ref{thm-dimi} fails for any diminishing stepsize.
\end{example}

When the prior information on $f^*$ is available, a dynamic stepsize rule is usually considered to achieve an optimal convergence property in the literature of subgradient methods; see, e.g., \cite{Bertsekas99,CensorSegal06,HuJNCA16,Nedic01,Nedic10}. Below, we show the convergence of the incremental subgradient method
to an optimal solution of problem \eqref{eq-SP} when the dynamic stepsize rule (S3) is adopted.

\begin{theorem}\label{thm-dyn}
Suppose Assumptions \ref{asp-HF} and \ref{asp-HC} are satisfied. Let $\{x_k\}$ be a sequence generated by Algorithm \ref{alg-IncQS} with the dynamic stepsize rule {\rm (S3)}. Then $\{x_k\}$ converges to an optimal solution of problem \eqref{eq-SP}.
\end{theorem}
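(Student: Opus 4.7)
The plan is to feed the dynamic stepsize (S3) into the basic inequality of Lemma \ref{lem-BI} and show that it produces a Fejér-type contraction, from which standard subgradient-style arguments yield both $f(x_k)\to f^*$ and convergence of $\{x_k\}$ to a specific optimal solution.

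First I would substitute $v_k=\gamma_k C_{p,m}(f(x_k)-f^*)^{1/p}/m^2$ into \eqref{eq-BI}. The two stepsize-dependent terms on the right hand side combine as
\begin{equation*}
-2v_kC_{p,m}(f(x_k)-f^*)^{1/p}+m^2v_k^2
=-\frac{C_{p,m}^2}{m^2}\,\gamma_k(2-\gamma_k)\,(f(x_k)-f^*)^{2/p},
\end{equation*}
and the hypothesis $0<\underline{\gamma}\le\gamma_k\le\overline{\gamma}<2$ makes the coefficient $\gamma_k(2-\gamma_k)\ge\underline{\gamma}(2-\overline{\gamma})>0$. Hence, for every $x^*\in X^*$ and every $k$,
\begin{equation*}
\|x_{k+1}-x^*\|^2\le \|x_k-x^*\|^2-\frac{C_{p,m}^2\,\underline{\gamma}(2-\overline{\gamma})}{m^2}\,(f(x_k)-f^*)^{2/p}.
\end{equation*}

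From this I would draw two conclusions. Fejér monotonicity: $\{\|x_k-x^*\|\}$ is nonincreasing for every $x^*\in X^*$, so $\{x_k\}$ is bounded and $\|x_k-x^*\|$ converges for each $x^*\in X^*$. Telescoping the same inequality over $k=0,1,\dots$ gives $\sum_{k=0}^\infty (f(x_k)-f^*)^{2/p}<\infty$, and in particular $\lim_{k\to\infty}f(x_k)=f^*$. Because $\{x_k\}$ is bounded, it has a cluster point $\bar x$; by the continuity of $f$ hypothesized on the component functions, $f(\bar x)=f^*$, so $\bar x\in X^*$.

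Finally, I would upgrade the subsequential convergence to full convergence by invoking Fejér monotonicity at the particular point $x^*=\bar x\in X^*$. Then $\{\|x_k-\bar x\|\}$ is nonincreasing and has a subsequence tending to $0$, so the whole sequence tends to $0$, proving $x_k\to\bar x\in X^*$. The main subtlety — really the only non-routine ingredient — is the algebraic matching in the first display: the stepsize (S3) is calibrated precisely so that $2v_kC_{p,m}(f(x_k)-f^*)^{1/p}-m^2v_k^2$ becomes a positive multiple of $(f(x_k)-f^*)^{2/p}$, turning Lemma \ref{lem-BI} into a genuine quasi-Fejér contraction; once that is in hand, boundedness, function-value convergence, and convergence of iterates follow from the standard three-step pattern above.
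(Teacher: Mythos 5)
Your proposal is correct and follows essentially the same route as the paper's proof: substitute the dynamic stepsize (S3) into Lemma \ref{lem-BI} to obtain the contraction $\|x_{k+1}-x^*\|^2\le\|x_k-x^*\|^2-\underline{\gamma}(2-\overline{\gamma})\frac{C_{p,m}^2}{m^2}(f(x_k)-f^*)^{2/p}$, telescope to get $\sum_k(f(x_k)-f^*)^{2/p}<\infty$ and hence $f(x_k)\to f^*$, then use the monotonicity of $\|x_k-x^*\|$ applied at a cluster point $\bar x\in X^*$ to conclude $x_k\to\bar x$. The algebraic identity and the three-step structure match the paper exactly.
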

\begin{proof}
By Lemma \ref{lem-BI} and \eqref{eq-dyn+stepsize}, we obtain that, for any $x^*\in X^*$ and any $k\in \IN$,
\begin{equation*}\label{eq-thm-dyn}
\begin{array}{lll}
\|x_{k+1}-x^*\|^2&\le \|x_k-x^*\|^2-2v_kC_{p,m} (f(x_k)-f^*)^{\frac{1}{p}}+m^2v_k^2\\
&= \|x_k-x^*\|^2-\gamma_k(2-\gamma_k) \frac{C_{p,m}^2}{m^2}(f(x_k)-f^*)^{\frac{2}{p}}\\
&\le \|x_k-x^*\|^2-\underline{\gamma}(2-\overline{\gamma}) \frac{C_{p,m}^2}{m^2}(f(x_k)-f^*)^{\frac{2}{p}}.
\end{array}
\end{equation*}
This shows that the sequence $\{\|x_k-x^*\|\}$ is decreasing, and hence $\{x_k\}$ is bounded. It also follows from the above inequality that
\[
\sum_{k=1}^\infty (f(x_k)-f^*)^{\frac2p} \le \frac{m^2}{\underline{\gamma}(2-\overline{\gamma})C_{p,m}^2}\|x_0-x^*\|^2,
\]
which is finite. Consequently, noting that $f(x_k)-f^*\ge 0$ for any $k\in \IN$, one has that $\lim_{k\to \infty} f(x_k)= f^*$.
Hence, any cluster point of $\{x_k\}$ is an optimal solution of problem \eqref{eq-SP}, denoted by $\bar{x}\in X^*$.
Since further the sequence $\{\|x_k-x^*\|\}$ is decreasing, it converges to $\|\bar{x}-x^*\|$ for any $x^*\in X^*$. Hence $\{x_k\}$ converges to an optimal solution of problem \eqref{eq-SP}. The proof is complete.
\end{proof}

At the end of this subsection, we present a finite convergence property of the incremental subgradient method to the solution set $X^*$ of problem \eqref{eq-SP} under the assumption that $X^*$ has a nonempty interior.

\begin{theorem}\label{thm-FC}
Suppose Assumptions \ref{asp-HF} and \ref{asp-HC} are satisfied. Let $\{x_k\}$ be a sequence generated by Algorithm \ref{alg-IncQS}. Let $x^*\in X^*$ and $\sigma>0$, and suppose that $\mathbf{B}(x^*,\sigma)\subseteq X^*$.
Then $x_k\in X^*$ for some $k\in \IN$, provided that one of the following conditions:
\begin{enumerate}[{\rm (i)}]
  \item $v_k = v\in (0, \frac{2\sigma}{m})$ for any $k\in \IN$, and
  \item the sequence $\{v_k\}$ satisfies the diminishing stepsize rule {\rm (S2)}.
\end{enumerate}
\end{theorem}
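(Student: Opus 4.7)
The plan is to exploit the interior hypothesis $B(x^*,\sigma)\subseteq X^*$ at the level of a single quasi-subgradient step to get a much stronger one-step descent than Lemma \ref{lem-BI} by itself provides, and then argue finite termination by contradiction under either stepsize hypothesis.

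The key preparatory lemma I would prove is: whenever $f_i(z_{k,i-1})>f_i^*$ and $g_{k,i}\in\partial f_i(z_{k,i-1})\cap S(0,1)$, one has
\[
\langle g_{k,i},\,z_{k,i-1}-x^*\rangle\ \ge\ \sigma.
\]
This follows directly from the defining property \eqref{eq-def-QS} of the quasi-subdifferential: every point $y=x^*+\sigma u$ with $\|u\|\le 1$ lies in $B(x^*,\sigma)\subseteq X^*\subseteq X_i^*$, so $f_i(y)=f_i^*<f_i(z_{k,i-1})$ and hence $y\in{\rm lev}_{<f_i(z_{k,i-1})}f_i$; thus $\langle g_{k,i},\,x^*+\sigma u-z_{k,i-1}\rangle\le 0$ for every unit $u$, and taking $u=g_{k,i}$ yields the claim.

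Substituting this sharper bound into the nonexpansive projection step used inside the proof of Lemma \ref{lem-BI} gives, for every index $i$ at which an actual subgradient step occurs in cycle $k$,
\[
\|z_{k,i}-x^*\|^2\ \le\ \|z_{k,i-1}-x^*\|^2-(2v_k\sigma-v_k^2),
\]
while skipped indices leave the distance unchanged. Summing across the cycle yields $\|x_{k+1}-x^*\|^2\le\|x_k-x^*\|^2-n_k(2v_k\sigma-v_k^2)$, where $n_k$ is the number of genuinely active steps. A short check using Assumption \ref{asp-HF} (so $X^*=\bigcap_i X_i^*$) shows that $x_k\notin X^*$ forces $n_k\ge 1$: some $f_{i_0}(x_k)>f_{i_0}^*$, and since skipped prefixes of the cycle do not move the iterate, $z_{k,i_0-1}=x_k$ and the step at $i_0$ is active.

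Finite convergence then drops out by contradiction in both cases. Under (i), $v\in(0,2\sigma/m)\subseteq(0,2\sigma)$ makes $2v\sigma-v^2$ a fixed positive constant, so if $x_k\notin X^*$ for every $k$ the estimate $\|x_{k+1}-x^*\|^2\le\|x_k-x^*\|^2-(2v\sigma-v^2)$ would drive $\|x_k-x^*\|^2$ below zero in finitely many cycles---impossible. Under (ii), $v_k\to 0$ guarantees $2v_k\sigma-v_k^2\ge v_k\sigma$ for all sufficiently large $k$, and then $\sigma\sum_k v_k=\infty$ makes the telescoped decrement diverge, giving the same contradiction. The only really subtle step is the key inequality itself: it provides a lower bound on the inner product that is independent of $f_i(z_{k,i-1})-f_i^*$ and is therefore strictly stronger than the H\"older-type bound in Lemma \ref{lem-HC} which underlies Theorems \ref{thm-const}--\ref{thm-dyn}.
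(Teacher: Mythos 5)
Your proof is correct, and its central ingredient is exactly the paper's: the lower bound $\langle g_{k,i},z_{k,i-1}-x^*\rangle\ge\sigma$ for every active subiteration, obtained by testing the quasi-subdifferential inequality \eqref{eq-def-QS} at the point $x^*+\sigma g_{k,i}\in\mathbf{B}(x^*,\sigma)\subseteq X^*\subseteq X_i^*$ (and your observation that $x_k\notin X^*$ forces at least one active subiteration per cycle is the same nonemptiness claim the paper makes for its index set $I_k$). Where you diverge is in how the contradiction is extracted. The paper sums the crude per-step estimate $\|z_{k,i}-x^*\|^2\le\|z_{k,i-1}-x^*\|^2-2v_k\langle g_{k,i},z_{k,i-1}-x^*\rangle+v_k^2$ over all $m$ subiterations (paying $mv_k^2$ regardless of activity), normalizes by $\sum_k v_k$, and invokes the averaging Lemma \ref{lem-averaging} to show $\liminf_n\sum_i\langle \hat g_{n,i},\hat z_{n,i-1}-x^*\rangle<\sigma$, contradicting the lower bound; this is where the threshold $v<\tfrac{2\sigma}{m}$ enters. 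You instead charge the $v_k^2$ term only to the $n_k\ge 1$ active steps and telescope the squared distance directly, getting $\|x_{k+1}-x^*\|^2\le\|x_k-x^*\|^2-n_k\,v_k(2\sigma-v_k)$ and a contradiction once the accumulated decrement exceeds $\|x_0-x^*\|^2$. This is more elementary (no appeal to Lemma \ref{lem-averaging}) and in case (i) actually needs only $v<2\sigma$ rather than $v<\tfrac{2\sigma}{m}$, so it proves a marginally stronger statement; the paper's averaging formulation, on the other hand, treats the constant and diminishing cases through one uniform inequality \eqref{eq-FC-3}. The one spot to tighten in your write-up is the $n_k\ge1$ claim: if some index before $i_0$ is already active you cannot conclude $z_{k,i_0-1}=x_k$, but then that earlier active index already gives $n_k\ge1$, so the dichotomy closes the gap.
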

\begin{proof}
To proceed, we define a new process $\{\hat{x}_k\}$ via the classical incremental subgradient method starting with $\hat{x}_0:=x_0$. That is, for each iteration, we start with $\hat{z}_{k,0}:=\hat{x}_k$, through $i=1,\dots,m$,
\begin{equation}\label{eq-newprog}
\hat{z}_{k,i}:=P_X \big(\hat{z}_{k,i-1}-v_k \hat{g}_{k,i}\big), \mbox{ where }
\hat{g}_{k,i}\in\left\{\begin{matrix}
   \partial f_i(\hat{z}_{k,i-1}) \cap S(0,1),&{\mbox{if }f_{i}(\hat{z}_{k,i-1})>f_{i}^*,}\\ \{0\}, &{\rm otherwise},
\end{matrix}\right.
\end{equation}
and finally arrive at $\hat{x}_{k+1}:=\hat{z}_{k,m}$.
Comparing with Algorithm \ref{alg-IncQS}, we observe that the process $\{\hat{x}_k\}$ is identical to $\{x_k\}$.

We prove by contradiction, assuming that $f(\hat{x}_k)>f^*$ for any $k\in \mathbb{N}$. Fixing $k\in \mathbb{N}$, we define
\begin{equation}\label{eq-thm-FC-Ik}
I_k:=\{i\in \{1,\dots,m\}:f_{i}(\hat{z}_{k,i-1})>f_{i}^*\}.
\end{equation}
Clearly, $I_k\neq \emptyset$; otherwise, $f(\hat{x}_k)=f^*$ and a contradiction is achieved. Fix $i\in I_k$. By the assumption that $\mathbf{B}(x^*,\sigma)\subseteq X^*$ and $\|\hat{g}_{k,i}\|= 1$, one has that $x^*+\sigma\hat{g}_{k,i}\in X^*$, and hence $f_i(x^*+\sigma\hat{g}_{k,i})=f_i^*<f_{i}(\hat{z}_{k,i-1})$ (cf. \eqref{eq-thm-FC-Ik}). Then it follows from Definition \ref{eq-def-QS} that
$\langle \hat{g}_{k,i}, x^*+\sigma \hat{g}_{k,i}-\hat{z}_{k,i-1}\rangle \le 0$. Consequently,
\begin{equation*}\label{eq-thm-FC-1}
\langle \hat{g}_{k,i}, \hat{z}_{k,i-1}- x^*\rangle \ge \sigma\mbox{ when }i\in I_k, \mbox{ and }
\langle \hat{g}_{k,i}, \hat{z}_{k,i-1}- x^*\rangle =0 \mbox{ otherwise}
\end{equation*}
(by \eqref{eq-newprog} that $\hat{g}_{k,i}=0$ when $i\notin I_k$).
Therefore, we obtain that
\begin{equation}\label{eq-thm-FC-1b}
\sum_{i=1}^m \langle \hat{g}_{k,i}, \hat{z}_{k,i-1}-x^*\rangle\ge |I_k|\sigma\ge \sigma\quad \mbox{for any }k \in \IN,
\end{equation}
where $|I_k|\ge 1$ since $I_k\neq \emptyset$.

On the other hand, by \eqref{eq-newprog}, it follows that
\[
\begin{array}{lll}
\|\hat{z}_{k,i}-x^*\|^2&\le \left\|\hat{z}_{k,i-1}-v_k \hat{g}_{k,i}-x^*\right\|^2 \\
&\le \|\hat{z}_{k,i-1}-x^*\|^2-2v_k \left\langle \hat{g}_{k,i}, \hat{z}_{k,i-1}-x^*\right\rangle+ v_k^2.
\end{array}
\]
Summing the above inequality over $i=1,\dots,m$, one has
\begin{equation*}\label{eq-FC-2a}
v_k\sum_{i=1}^m \langle \hat{g}_{k,i}, \hat{z}_{k,i-1}-x^*\rangle
\le \frac{\|\hat{x}_{k+1}-x^*\|^2-\|\hat{x}_{k}-x^*\|^2}{2}+\frac{mv_k^2}{2};
\end{equation*}
consequently,
\begin{equation}\label{eq-FC-3}
\frac{\sum_{k=0}^{n-1} \left(v_k\sum_{i=1}^m \langle \hat{g}_{k,i}, \hat{z}_{k,i-1}-x^*\rangle\right)}{\sum_{k=0}^{n-1} v_k}
\le \frac{\|x_0-x^*\|^2}{2\sum_{k=0}^{n-1} v_k}+\frac{m\sum_{k=0}^{n-1} v_k^2}{2\sum_{k=0}^{n-1} v_k}.
\end{equation}
We now claim, under the assumption of (i) or (ii), that
\begin{equation}\label{eq-FC-4}
\liminf_{n\to \infty}\, \sum_{i=1}^m \langle \hat{g}_{n,i}, \hat{z}_{n,i-1}-x^*\rangle <\sigma.
\end{equation}
\begin{enumerate}[{\rm (i)}]
\item When a constant stepsize $v\in (0, \frac{2\sigma}{m})$ is used, \eqref{eq-FC-3} is reduced to
\[
\frac{\sum_{k=0}^{n-1} \sum_{i=1}^m \langle \hat{g}_{k,i}, \hat{z}_{k,i-1}-x^*\rangle}{n} \le \frac{\|x_0-x^*\|^2}{2nv}+\frac{mv}2,
\]
and thus, by Lemma \ref{lem-averaging}, we obtain that
\[\liminf_{n\to \infty} \sum_{i=1}^m \langle \hat{g}_{n,i}, \hat{z}_{n,i-1}-x^*\rangle\le \liminf_{n\to \infty} \frac{\sum_{k=0}^{n-1} \sum_{i=1}^m \langle \hat{g}_{k,i}, \hat{z}_{k,i-1}-x^*\rangle}{n}\le \frac{mv}2<\sigma.\]
This shows \eqref{eq-FC-4}, as desired.
\item When a diminishing stepsize is used, by \eqref{eq-thm-dimi-0b} and \eqref{eq-thm-dimi-0c}, it also follows from Lemma \ref{lem-averaging} and \eqref{eq-FC-3} that
\[\liminf_{n\to \infty} \sum_{i=1}^m \langle \hat{g}_{n,i}, \hat{z}_{n,i-1}-x^*\rangle\le
\liminf_{n\to \infty} \frac{\sum_{k=0}^{n-1} \left(v_k\sum_{i=1}^m \langle \hat{g}_{k,i}, \hat{z}_{k,i-1}-x^*\rangle\right)}{\sum_{k=0}^{n-1} v_k}
\le 0<\sigma.\]
\end{enumerate}
Hence we proved \eqref{eq-FC-4} under the assumption of (i) or (ii), which arrives at a contradiction with \eqref{eq-thm-FC-1b}.
The proof is complete.
\end{proof}

\subsection{Randomized incremental subgradient method}
It could be very computationally expensive to calculate the subgradients of all component functions at each iteration of the incremental subgradient method, especially when the number of component functions is large and the calculation of subgradients is not simple.
To economize on the computational cost of each iteration, we propose a randomized incremental subgradient method, in which only one component function $f_{\omega_i}$ is randomly selected to construct the subgradient direction at each iteration, rather than to take each $f_i$ into account exactly once within an ordered cycle.

This subsection aims to explore the convergence properties of the randomized incremental subgradient method for solving problem \eqref{eq-SP} when using typical stepsize rules. The randomized incremental subgradient method is formally presented as follows.

\vspace{10pt}
{
\LinesNumbered
\IncMargin{1em}
\begin{algorithm}[H]
  \SetKwData{Left}{left}\SetKwData{This}{this}\SetKwData{Up}{up}
  \SetKwFunction{Union}{Union}\SetKwFunction{FindCompress}{FindCompress}
  \SetKwInOut{Input}{input}\SetKwInOut{Output}{output}
  \BlankLine
  Initialize an initial point $x_0\in \R^n$, a stepsize sequence $\{v_k\}$, and let $k:=0$\;
  \While{$f(x_k)>f^*$}
  {{Let $I_k:=\left\{i\in \{1,\dots,m\}:f_i(x_k)>f_i^*\right\};$}\\
  {Pick up equiprobably a random variable $\omega_k$ from the set $I_k$, calculate $g_{k,\omega_k}\in \partial f_{\omega_k}(x_k) \cap S(0,1)$, and let $x_{k+1}:=P_X \left(x_k-v_k g_{k,\omega_k}\right)$;}\\
  Let $k:=k+1$.}
  \caption{Randomized incremental subgradient method.}\label{alg-Rand-QS}
\end{algorithm}\
}

We recall the supermartingale convergence theorem, which is useful in the convergence analysis of the randomized incremental subgradient method.

\begin{theorem}[{\cite[pp. 148]{BertsekasTsitsiklis96}}]\label{thm-super}
Let $\{Y_k\}$, $\{Z_k\}$ and $\{W_k\}$ be three sequences of random variables, and let $\{\mathcal{F}_k\}$ be a sequence of sets of random variables such that $\mathcal{F}_k\subseteq \mathcal{F}_{k+1}$ for any $k\in \IN$. Suppose for any $k\in \IN$ that
\begin{enumerate}[{\rm (a)}]
  \item $Y_k$, $Z_k$ and $W_k$ are functions of nonnegative random variables in $\mathcal{F}_k$;
  \item $\mathbf{E}\left\{Y_{k+1}\mid \mathcal{F}_k\right\}\le Y_k-Z_k+W_k$;
  \item $\sum_{k=0}^\infty W_k<\infty$.
\end{enumerate}
Then $\sum_{k=0}^\infty Z_k<\infty$ and $\{Y_k\}$ converges to a nonnegative random variable $Y$ with probability 1.
\end{theorem}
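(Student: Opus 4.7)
The plan is to recast inequality (b) as a supermartingale via the Robbins--Siegmund compensation, and then to deduce almost sure convergence from Doob's theorem after a stopping-time localization that handles the absence of uniform integrability.

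First I would introduce the compensated cumulative sums $A_k:=\sum_{j=0}^{k-1}W_j$ and $B_k:=\sum_{j=0}^{k-1}Z_j$, both of which are $\mathcal{F}_{k-1}$-measurable by (a), and define $M_k:=Y_k-A_k+B_k$. Using (b) together with $A_{k+1}=A_k+W_k$ and $B_{k+1}=B_k+Z_k$ one verifies
\[
\mathbf{E}\{M_{k+1}\mid\mathcal{F}_k\}\le (Y_k-Z_k+W_k)-A_k-W_k+B_k+Z_k=M_k,
\]
so $\{M_k\}$ is an $\{\mathcal{F}_k\}$-supermartingale. Since $Y_k\ge 0$ and $B_k\ge 0$, one has $M_k\ge -A_k$, and by (c) the limit $A_\infty:=\sum_{j=0}^\infty W_j$ is finite almost surely.

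Because the pathwise lower bound $-A_k$ is random rather than uniform, I would localize using the stopping times $\tau_c:=\inf\{k:A_{k+1}>c\}$ for $c>0$, each of which is an $\{\mathcal{F}_k\}$-stopping time since $A_{k+1}\in\mathcal{F}_k$. On $\{k\le\tau_c\}$ we have $A_k\le c$ and hence $M_k\ge -c$, so $\{M_{k\wedge\tau_c}+c\}_{k\in\IN}$ is a nonnegative supermartingale, to which Doob's theorem applies and yields almost sure convergence to a finite limit. As $c\to\infty$, $\tau_c\uparrow\infty$ a.s.\ on the full-measure event $\{A_\infty<\infty\}$, so a pathwise argument shows that $M_k$ itself converges a.s.\ to a finite random variable. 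Since $A_k\to A_\infty$ a.s., it follows that $Y_k+B_k$ converges a.s.

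To finish, I would use the monotonicity of $\{B_k\}$: as a nondecreasing sequence of nonnegative random variables, $B_k$ either tends to a finite $B_\infty$ or diverges to $+\infty$; the latter is ruled out by the a.s.\ convergence of $Y_k+B_k\ge B_k$, which yields $\sum_{k=0}^\infty Z_k=B_\infty<\infty$ a.s. Subtracting the convergent sequence $\{B_k\}$ from $\{Y_k+B_k\}$ then gives a.s.\ convergence of $\{Y_k\}$ to some nonnegative random variable $Y$. The main obstacle I anticipate is that no integrability hypothesis is placed on $Y_0$ or on $\sum W_k$, so one cannot take unconditional expectations directly; the stopping-time localization above is what makes Doob's theorem applicable without such an assumption.
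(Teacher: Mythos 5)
The paper does not prove this statement at all: it is imported verbatim from Bertsekas and Tsitsiklis (the supermartingale convergence / Robbins--Siegmund lemma), so there is no internal proof to compare against, and your blind attempt is in effect the only proof on the table. Judged on its own, it is the standard Robbins--Siegmund argument and it is correct: the compensation $M_k:=Y_k-A_k+B_k$ is a supermartingale by the computation you display; $\tau_c:=\inf\{k:A_{k+1}>c\}$ is a legitimate stopping time because $A_{k+1}=\sum_{j=0}^{k}W_j$ is $\mathcal{F}_k$-measurable; the stopped process $M_{k\wedge\tau_c}+c$ is a nonnegative supermartingale; letting $c\to\infty$ on the full-measure event $\{A_\infty<\infty\}$ (where $\tau_c=\infty$ once $c>A_\infty$) recovers a.s.\ convergence of $M_k$; and the monotonicity of $B_k$ together with $Y_k\ge 0$ then yields both $\sum_k Z_k<\infty$ and convergence of $Y_k$. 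One technical point is worth making explicit: the localization by $\tau_c$ removes the random lower bound on $M_k$, but it does not restore integrability at time zero --- if $Y_0$ is not integrable, neither is $M_{0\wedge\tau_c}+c$, so Doob's theorem in its classical (integrable) form does not literally apply. You need the convergence theorem for nonnegative \emph{generalized} supermartingales, where conditional expectations are taken in $[0,\infty]$; its proof via the upcrossing bound $(b-a)\,\mathbf{E}\,U_n[a,b]\le|a|$ and conditional Fatou needs no moment hypothesis and also gives finiteness of the limit when the initial value is a.s.\ finite. Alternatively, localize further on the $\mathcal{F}_0$-event $\{Y_0\le c\}$. With that caveat stated, the argument is complete and is exactly the proof one would find behind the citation.
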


To begin the convergence analysis of Algorithm \ref{alg-Rand-QS}, we provide below a basic inequality of a randomized incremental subgradient iteration in terms of expectation. Recall that $R_{p,m}$ is defined in \eqref{eq-para-r}.
\begin{lemma}\label{lem-BI+random}
Suppose that Assumptions \ref{asp-HF} and \ref{asp-HC} are satisfied.
Let $\{x_k\}$ be a sequence generated by Algorithm \ref{alg-Rand-QS}, and let $\mathcal{F}_k:=\{x_0,x_1,\dots,x_k\}$ for any $k\in \IN$.
Then it holds, for any $x^*\in X^*$ and any $k\in \IN$, that
\begin{equation}\label{eq-BI-r}
\mathbf{E}\left\{\|x_{k+1}-x^*\|^2\mid\mathcal{F}_k\right\}\le \|x_k-x^*\|^2-2v_k\frac{R_{p,m}}m (f(x_k)-f^*)^{\frac{1}{p}}+v_k^2.
\end{equation}
\end{lemma}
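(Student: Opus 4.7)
The plan is to mirror the deterministic basic inequality of Lemma \ref{lem-BI}, but the analysis becomes cleaner because each iteration of Algorithm \ref{alg-Rand-QS} performs only a single projected subgradient step; in particular, the intra-cycle Hölder bookkeeping that produced the $m^2v_k^2$ term (rather than $v_k^2$) in Lemma \ref{lem-BI} disappears entirely. I would first assume $f(x_k)>f^*$ (otherwise the \textbf{while} loop of Algorithm \ref{alg-Rand-QS} terminates at iteration $k$ and there is nothing to prove); by Assumption \ref{asp-HF} this means $I_k\neq\emptyset$. Then, conditioning on the history $\mathcal{F}_k$ and fixing a realization $\omega_k=i\in I_k$, the nonexpansiveness of $P_X$ yields
\[
\|x_{k+1}-x^*\|^2\le \|x_k-x^*\|^2-2v_k\langle g_{k,i},x_k-x^*\rangle+v_k^2,
\]
and since $i\in I_k$ forces $x_k\notin X_i^*$, Lemma \ref{lem-HC} applied to $f_i$ together with $\|g_{k,i}\|=1$ gives $\langle g_{k,i},x_k-x^*\rangle\ge L_{\max}^{-1/p}(f_i(x_k)-f_i^*)^{1/p}$.

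Next I would take conditional expectation over $\omega_k$, which is equiprobable on $I_k$, to obtain
\[
\mathbf{E}\{\|x_{k+1}-x^*\|^2\mid\mathcal{F}_k\}\le \|x_k-x^*\|^2-\frac{2v_kL_{\max}^{-1/p}}{|I_k|}\sum_{i\in I_k}(f_i(x_k)-f_i^*)^{\frac{1}{p}}+v_k^2.
\]
To convert the sum into $(f(x_k)-f^*)^{1/p}$, I would apply Lemma \ref{lem-IneQ-HYMOR} with $\gamma=1/p$ on $|I_k|$ terms: part (i) handles $p\ge 1$ and yields that the sum is at least $(f(x_k)-f^*)^{1/p}$, while part (ii) handles $p<1$ and yields at least $|I_k|^{1-1/p}(f(x_k)-f^*)^{1/p}$. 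Here I use that $\sum_{i\in I_k}(f_i(x_k)-f_i^*)=f(x_k)-f^*$, which follows from Assumption \ref{asp-HF} since indices outside $I_k$ contribute zero and $f^*=\sum_{i=1}^m f_i^*$.

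The last bookkeeping step, which I expect to be the only mildly delicate point, is to show the factor extracted from the two cases satisfies
\[
\frac{1}{|I_k|}\min\bigl\{1,\,|I_k|^{1-\frac{1}{p}}\bigr\}\ge \frac{1}{m}\min\bigl\{1,\,m^{1-\frac{1}{p}}\bigr\},
\]
so that the coefficient of $(f(x_k)-f^*)^{1/p}$ is at least $2v_kR_{p,m}/m$ by \eqref{eq-para-r}. For $p\ge 1$ both minima equal $1$ and the inequality reduces to $1/|I_k|\ge 1/m$. For $p<1$ the two sides simplify to $|I_k|^{-1/p}$ and $m^{-1/p}$ respectively; since the exponent $-1/p$ is negative and $|I_k|\le m$, the inequality holds. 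Plugging this lower bound into the expectation estimate produces exactly \eqref{eq-BI-r}, completing the argument.
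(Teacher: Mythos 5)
Your proposal is correct and follows essentially the same route as the paper's proof: nonexpansiveness of the projection, Lemma \ref{lem-HC} applied to $f_{\omega_k}$ with $x^*\in X_{\omega_k}^*$, conditional expectation over the uniform choice on $I_k$, Lemma \ref{lem-IneQ-HYMOR} to pass from $\sum_{i\in I_k}(f_i(x_k)-f_i^*)^{1/p}$ to $(f(x_k)-f^*)^{1/p}$, and the comparison $\tfrac{1}{|I_k|}\min\{1,|I_k|^{1-1/p}\}\ge\tfrac{1}{m}\min\{1,m^{1-1/p}\}$. The only cosmetic difference is that you split the exponent cases $p\ge 1$ and $p<1$ explicitly where the paper writes the same bound compactly as $\min\{|I_k|^{-1},|I_k|^{-1/p}\}$.
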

\begin{proof}
Fix $x^*\in X^*$ and $k\in \IN$. In view of Algorithm \ref{alg-Rand-QS} and by the nonexpansive property of projection operator, we have
\begin{equation}\label{eq-lem-BI-r1}
\|x_{k+1}-x^*\|^2\le \left\|x_k-v_k g_{k,\omega_k}-x^*\right\|^2 = \|x_k-x^*\|^2-2v_k \left\langle g_{k,\omega_k}, x_k-x^*\right\rangle+ v_k^2.
\end{equation}
Note by Assumption \ref{asp-HF} that $x^*\in X^*= \cap_{i=1}^m X_i^*$, and so $x^*\in X_{\omega_k}^*$. By Algorithm \ref{alg-Rand-QS}, one sees that $\omega_k\in I_k$ and thus $f_{\omega_k}(x_k)>f_{\omega_k}^*$. Then, Lemma \ref{lem-HC} is applicable (with $f_{\omega_k}$, $x_k$, $X_{\omega_k}^*$ in place of $h$, $x$, $X^*$) to concluding that
\[
\left\langle g_{k,\omega_k}, x_k-x^*\right\rangle \ge \left(\frac{f_{\omega_k}(x_k)-f_{\omega_k}(x^*)}{L_{\omega_k}}\right)^{\frac{1}{p}}\ge L_{\max}^{-\frac1p} \left(f_{\omega_k}(x_k)-f_{\omega_k}^*\right)^{\frac{1}{p}}.
\]
Then \eqref{eq-lem-BI-r1} is reduced to
\begin{equation*}\label{eq-lem-BI-r1a}
\|x_{k+1}-x^*\|^2\le \|x_k-x^*\|^2-2v_k L_{\max}^{-\frac1p} \left(f_{\omega_k}(x_k)-f_{\omega_k}^*\right)^{\frac{1}{p}}+ v_k^2.
\end{equation*}
Taking the conditional expectation with respect to $\mathcal{F}_k$, it follows that
\begin{equation}\label{eq-lem-BI-r2}
\mathbf{E}\left\{\|x_{k+1}-x^*\|^2\mid\mathcal{F}_k\right\}\le \|x_k-x^*\|^2
-2v_k L_{\max}^{-\frac1p}\mathbf{E}\left\{\left(f_{\omega_k}(x_k)-f_{\omega_k}^*\right)^{\frac{1}{p}}\mid\mathcal{F}_k\right\}+ v_k^2.
\end{equation}

Below, we provide an estimation of the term $\mathbf{E}\left\{\left(f_{\omega_k}(x_k)-f_{\omega_k}^*\right)^{\frac{1}{p}}\mid\mathcal{F}_k\right\}$.
Noting in Algorithm \ref{alg-Rand-QS} that $\omega_k$ is uniformly distributed on $I_k$, we have $P(\omega_k=i)=\frac{1}{|I_k|}$ for each $i\in I_k$, and then conclude by the elementary of probability theory that
\begin{equation}\label{eq-lem-BI-r3}
\begin{array}{lll}
\mathbf{E}\left\{\left(f_{\omega_k}(x_k)-f_{\omega_k}^*\right)^{\frac{1}{p}}\mid\mathcal{F}_k\right\}
&=\frac{1}{|I_k|}\sum_{i\in I_k}\left(f_i(x_k)-f_i^*\right)^{\frac1p}\\
&\ge \frac{1}{|I_k|} \min\left\{1,|I_k|^{1-\frac1p}\right\}\left(\sum_{i\in I_k}(f_i(x_k)-f_i^*)\right)^{\frac1p},
\end{array}
\end{equation}
where the inequality follows from Lemma \ref{lem-IneQ-HYMOR} (with $f_i(x_k)-f_i^*$ and $\frac1p$ in place of $a_i$ and $\gamma$). By the definition of $I_k$ (see Algorithm \ref{alg-Rand-QS}), it follows that $f_i(x_k)=f_i^*$ for each $i\notin I_k$, and so, by Assumption \ref{asp-HF}, one has
\begin{equation}\label{eq-lem-BI-r3a}
\sum_{i\in I_k}(f_i(x_k)-f_i^*)=\sum_{i=1}^m(f_i(x_k)-f_i(x^*))=f(x_k)-f^*.
\end{equation}
Note by $|I_k|\le m$ that
\begin{equation}\label{eq-lem-BI-r3b}
\frac{1}{|I_k|} \min\left\{1,|I_k|^{1-\frac1p}\right\}=\min\left\{|I_k|^{-1},|I_k|^{-\frac1p}\right\}\ge \frac1m\min\left\{1,m^{1-\frac1p}\right\}.
\end{equation}
Therefore, by \eqref{eq-lem-BI-r3a} and \eqref{eq-lem-BI-r3b}, \eqref{eq-lem-BI-r3} is reduced to that
\[
\mathbf{E}\left\{\left(f_{\omega_k}(x_k)-f_{\omega_k}^*\right)^{\frac{1}{p}}\mid\mathcal{F}_k\right\}
\ge \frac1m\min\left\{1,m^{1-\frac1p}\right\}\left(f(x_k)-f^*\right)^{\frac1p},
\]
which, together with \eqref{eq-lem-BI-r2} and \eqref{eq-para-r}, yields \eqref{eq-BI-r}.
The proof is complete.
\end{proof}

By virtue of Lemma \ref{lem-BI+random}, we explore the convergence properties (with probability 1) of the randomized incremental subgradient method when using different stepsize rules in Theorems \ref{thm-const-r}-\ref{thm-dyn-r}, respectively.

\begin{theorem}\label{thm-const-r}
Suppose Assumptions \ref{asp-HF} and \ref{asp-HC} are satisfied.
Let $\{x_k\}$ be a sequence generated by Algorithm \ref{alg-Rand-QS} with the constant stepsize rule {\rm (S1)}.
Then it holds, with probability 1, that
\begin{equation}\label{eq-const-r}
\liminf_{k\to \infty} f(x_k)\le f^*+\left(\frac{mv}{2R_{p,m}}\right)^p.
\end{equation}
\end{theorem}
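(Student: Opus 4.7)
The plan is to lift the contradiction argument of Theorem \ref{thm-const} to the probabilistic setting, replacing the deterministic descent by the conditional-expectation descent of Lemma \ref{lem-BI+random} and handling the non-adaptedness of the liminf event by a truncation device. Writing $\alpha := \left(mv/(2R_{p,m})\right)^p$, I first suppose for contradiction that $P\left(\liminf_{k\to\infty} f(x_k) > f^*+\alpha\right) > 0$. Since this event equals the countable union $\bigcup_{n, N\in\IN} B_{1/n, N}$ with $B_{\epsilon,N} := \left\{f(x_k) > f^* + \alpha + \epsilon \text{ for all } k \ge N\right\}$, countable subadditivity provides $n, N_0 \in \IN$ such that $P(B_{\epsilon,N_0}) > 0$ with $\epsilon := 1/n$.

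On $B_{\epsilon,N_0}$ and for every $k \ge N_0$ the uniform bound $(f(x_k)-f^*)^{1/p} \ge (\alpha+\epsilon)^{1/p}$ holds. Writing $(\alpha+\epsilon)^{1/p} = mv/(2R_{p,m}) + \delta$ for some $\delta > 0$ and exploiting the calibration identity $2v(R_{p,m}/m) \cdot mv/(2R_{p,m}) = v^2$, the extra $v^2$ term in the basic inequality of Lemma \ref{lem-BI+random} cancels exactly, leaving the clean conditional descent $\mathbf{E}\left\{\|x_{k+1} - x^*\|^2 \mid \mathcal{F}_k\right\} \le \|x_k - x^*\|^2 - 2v\delta R_{p,m}/m$ on $B_{\epsilon,N_0}$ for $k \ge N_0$.

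The main obstacle, which is where the probabilistic proof departs from the deterministic argument of Theorem \ref{thm-const}, is that $B_{\epsilon,N_0}\in\mathcal{F}_\infty$ depends on the entire tail of the trajectory and is \emph{not} $\mathcal{F}_k$-measurable, so one cannot simply multiply the conditional inequality by $\mathbf{1}_{B_{\epsilon,N_0}}$ and invoke the tower property. I will sidestep this by introducing the nonincreasing $\mathcal{F}_k$-measurable truncations $C_k := \bigcap_{j=N_0}^{k} \left\{f(x_j) > f^* + \alpha + \epsilon\right\}$ for $k \ge N_0$, which satisfy $C_k \supseteq B_{\epsilon,N_0}$. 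Since the uniform descent remains valid on the $\mathcal{F}_k$-measurable set $C_k$, multiplying the conditional inequality by $\mathbf{1}_{C_k}$ and taking unconditional expectations, together with $\mathbf{1}_{C_{k+1}} \le \mathbf{1}_{C_k}$ and $P(C_k) \ge P(B_{\epsilon,N_0})$, yields $\mathbf{E}\left\{\|x_{k+1}-x^*\|^2 \mathbf{1}_{C_{k+1}}\right\} \le \mathbf{E}\left\{\|x_k-x^*\|^2 \mathbf{1}_{C_k}\right\} - (2v\delta R_{p,m}/m)\,P(B_{\epsilon,N_0})$. Telescoping this from $k = N_0$ to $k = T-1$ and sending $T \to \infty$ drives the right-hand side to $-\infty$ while the left-hand side remains nonnegative, contradicting the assumption $P(B_{\epsilon,N_0}) > 0$. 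Hence no such $\epsilon, N_0$ can exist, and \eqref{eq-const-r} holds with probability $1$.
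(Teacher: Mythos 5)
Your argument is correct, but it takes a genuinely different route from the paper's. The paper fixes $\delta>0$, defines the target sublevel set $X_\delta:=X\cap{\rm lev}_{<f^*+(mv/(2R_{p,m})+\delta)^p}f$, and introduces a modified process $\{\hat{x}_k\}$ that coincides with $\{x_k\}$ until it enters $X_\delta$ and is then frozen at a point $y_\delta\in X_\delta$; assuming the process never enters $X_\delta$, Lemma \ref{lem-BI+random} gives a conditional descent with constant drift $2v\delta R_{p,m}/m$, and the supermartingale convergence theorem (Theorem \ref{thm-super}) forces $\sum_k 2v\delta R_{p,m}/m<\infty$ with probability 1, a contradiction; finally $\delta\to 0$. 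You instead decompose the bad event $\{\liminf_k f(x_k)>f^*+\alpha\}$ into countably many tail events $B_{\epsilon,N}$, replace the non-adapted $B_{\epsilon,N_0}$ by the decreasing $\mathcal{F}_k$-measurable truncations $C_k$, and telescope the unconditional expectations $\mathbf{E}\{\|x_k-x^*\|^2\mathbf{1}_{C_k}\}$ directly. What each buys: your route is more elementary (no supermartingale theorem is needed) and it confronts the measurability of the liminf event explicitly, which the paper's phrase ``assume that $\hat{x}_k\notin X_\delta$ for any $k$'' glosses over; the paper's route is shorter given that Theorem \ref{thm-super} is already in place for Theorem \ref{thm-dyn-r}, and its stopped-process device yields the slightly stronger pathwise statement that the iterates eventually enter $X_\delta$ almost surely. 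Two small points to make your write-up airtight: note that $\mathbf{E}\{\|x_{N_0}-x^*\|^2\mathbf{1}_{C_{N_0}}\}$ is finite because $\|x_{k+1}-x_k\|\le v$ by the nonexpansiveness of $P_X$ and $\|g_{k,\omega_k}\|=1$, so the telescoping base term is bounded; and record that on $C_k$ one has $f(x_k)>f^*$, so the iteration of Algorithm \ref{alg-Rand-QS} is actually performed and Lemma \ref{lem-BI+random} applies.
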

\begin{proof}
Fix $\delta>0$, and define a set $X_\delta$ by
\[
X_\delta:=X \cap {\rm lev}_{< f^*+\left(\frac{mv}{2R_{p,m}}+\delta\right)^p} f.
\]
Let $y_\delta\in X$ be such that $f(y_\delta)=f^*+\delta^p$ (this $y_\delta$ is well-defined by the continuity of $f$). Hence $y_\delta\in X_\delta$ by construction.
We define a new process $\{\hat{x}_k\}$ by letting $\hat{x}_0:=x_0$ and
\[\hat{x}_{k+1}:=\left\{\begin{matrix}
   P_X \left(\hat{x}_k-v_k \hat{g}_{k,\hat{\omega}_k}\right),&{\mbox{if }\hat{x}_k\notin X_\delta,}\\ y_\delta, &{\rm otherwise},
\end{matrix}\right.\]
where $\hat{g}_{k,\hat{\omega}_k}\in \partial f_{\hat{\omega}_k}(\hat{x}_k) \cap S(0,1)$. Clearly, the process $\{\hat{x}_k\}$ is identical to $\{x_k\}$, except that $\hat{x}_k$ enters $X_\delta$ and then the process terminates with $\hat{x}_k = y_\delta\in X_\delta$.

Assume that $\hat{x}_k\notin X_\delta$ for any $k\in \IN$, and let $\mathcal{\hat{F}}_k:=\{\hat{x}_0,\hat{x}_1,\dots,\hat{x}_k\}$ for any $k\in \IN$.
It says that $f(\hat{x}_k)\ge f^*+\left(\frac{mv}{2R_{p,m}}+\delta\right)^p$, and then it follows from Lemma \ref{lem-BI+random} that the following relation holds for any $x^*\in X^*$ and any $k\in \IN$:
\[
\begin{array}{lll}
\mathbf{E}\left\{\|\hat{x}_{k+1}-x^*\|^2\mid\mathcal{\hat{F}}_k\right\}
&\le \|\hat{x}_k-x^*\|^2-2v\frac{R_{p,m}}m (f(\hat{x}_k)-f^*)^{\frac{1}{p}}+v^2\\
&\le \|\hat{x}_k-x^*\|^2-2v\delta\frac{R_{p,m}}m.
\end{array}
\]
Then, by Theorem \ref{thm-super}, we obtain that $\sum_{k=0}^\infty 2v\delta\frac{R_{p,m}}m<\infty$ with probability 1,
which is impossible.
Hence, $\hat{x}_k\notin X_\delta$ only occurs finitely many times, and so $\hat{x}_k\in X_\delta$ for large $k$. Consequently, in the original
process, it holds with probability 1 that
\[
\liminf_{k\to \infty} f(x_k)\le f^*+\left(\frac{mv}{2R_{p,m}}+\delta\right)^p.
\]
Since $\delta>0$ is arbitrary, \eqref{eq-const-r} is obtained by letting $\delta\to 0$, and the proof is complete.
\end{proof}

\begin{remark}\label{rem-CT-Ran}
{\rm
Theorem \ref{thm-const-r} depicts the convergence of Algorithm \ref{alg-Rand-QS} to the optimal value of problem \eqref{eq-SP} within a tolerance, expressed in terms of the stepsize, the number of component functions and parameters of H${\rm \ddot{o}}$lder conditions, when the constant stepsize rule is adopted.
It is observed by \eqref{eq-thm-cons} and \eqref{eq-const-r} that the randomized incremental subgradient method (Algorithm \ref{alg-Rand-QS}) admits a much less error bound than that of the incremental subgradient method (Algorithm \ref{alg-IncQS}) when adopting the same stepsize. Indeed,
\[
\frac{\left(\frac{mv}{2R_{p,m}}\right)^p}{\left(\frac{m^2v}{2C_{p,m}}\right)^p}=\frac{C_{p,m}^p}{R_{p,m}^pm^p}
=\frac{\min\left\{1,(2m)^{p-1}\right\}}{\min\left\{1,m^{p-1}\right\}m^p}
\le\frac{\max\left\{1,2^{p-1}\right\}}{m^p}\ll 1.
\]
}
\end{remark}

The proof of the following theorem uses the property of the diminishing stepsize rule (cf. \eqref{eq-dim+stepsize}) and a line of analysis similar to that of Theorem \ref{thm-const-r}. Hence we omit the details.

\begin{theorem}\label{thm-dimi-r}
Suppose Assumptions \ref{asp-HF} and \ref{asp-HC} are satisfied. Let $\{x_k\}$ be a sequence generated by Algorithm \ref{alg-Rand-QS} with the diminishing stepsize rule {\rm (S2)}. Then $\liminf_{k\to \infty} f(x_k)= f^*$ with probability 1.
\end{theorem}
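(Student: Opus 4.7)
The plan is to mimic the proof of Theorem \ref{thm-const-r}, the only essential new ingredient being the use of $v_k\to 0$ to absorb the residual $v_k^2$ in the basic inequality of Lemma \ref{lem-BI+random}. Fix $\epsilon>0$ and set $X_\epsilon:=X\cap\mathrm{lev}_{\le f^*+\epsilon^p}f$. Since $v_k\to 0$ (cf.\ \eqref{eq-dim+stepsize}), there exists $K_\epsilon\in\IN$ such that $v_k\le \frac{R_{p,m}}{m}\epsilon$ for every $k\ge K_\epsilon$, and by Assumption \ref{asp-HF} we may fix any $x^*\in X^*\subseteq X_\epsilon$.

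Fix an integer $N\ge K_\epsilon$ and define a stopped process $\{\hat x_k\}_{k\ge N}$ by $\hat x_N:=x_N$ and
\[
\hat x_{k+1}:=\begin{cases} \text{the Algorithm \ref{alg-Rand-QS} update at }\hat x_k, & \text{if }\hat x_k\notin X_\epsilon,\\ x^*, & \text{otherwise,} \end{cases}
\]
so that $\hat x_k=x_k$ until the first entry into $X_\epsilon$. On the event $\{\hat x_k\notin X_\epsilon\}$ one has $f(\hat x_k)-f^*>\epsilon^p$, and Lemma \ref{lem-BI+random} combined with $v_k\le \frac{R_{p,m}}{m}\epsilon$ gives
\[
\mathbf{E}\bigl\{\|\hat x_{k+1}-x^*\|^2\mid \hat{\mathcal F}_k\bigr\}\le \|\hat x_k-x^*\|^2 -2v_k\frac{R_{p,m}}{m}\epsilon+v_k^2 \le \|\hat x_k-x^*\|^2-v_k\frac{R_{p,m}}{m}\epsilon,
\]
while on the complementary (already stopped) event both sides are zero. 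Taking $Y_k:=\|\hat x_k-x^*\|^2$, $Z_k:=v_k\frac{R_{p,m}}{m}\epsilon\,\mathbf{1}_{\{\hat x_k\notin X_\epsilon\}}$ and $W_k:=0$, Theorem \ref{thm-super} yields $\sum_{k\ge N}Z_k<\infty$ almost surely. On the event $\{\hat x_k\notin X_\epsilon \text{ for every }k\ge N\}$ this sum equals $\frac{R_{p,m}}{m}\epsilon\sum_{k\ge N}v_k=\infty$ by \eqref{eq-dim+stepsize}; hence that event has probability zero, so with probability one the original iterates satisfy $\inf_{k\ge N}f(x_k)\le f^*+\epsilon^p$.

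Since this conclusion holds for every $N\ge K_\epsilon$ and the countable intersection of probability-one events still has probability one, $\liminf_{k\to\infty}f(x_k)\le f^*+\epsilon^p$ almost surely; sending $\epsilon$ through a rational sequence decreasing to $0$ produces $\liminf_{k\to\infty}f(x_k)=f^*$ with probability one. The main technical obstacle lies in the bookkeeping of the stopped process so that the one-step inequality required by Theorem \ref{thm-super} remains valid on both the stopped and unstopped portions of the trajectory; once that is in place, the diminishing hypothesis plays exactly the role that the constant stepsize played in the proof of Theorem \ref{thm-const-r}.
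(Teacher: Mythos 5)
Your proof is correct and follows exactly the route the paper intends: the paper omits this proof, stating only that it uses the diminishing stepsize property together with a line of analysis similar to that of Theorem \ref{thm-const-r}, which is precisely your stopped-process plus supermartingale (Theorem \ref{thm-super}) argument, with $v_k\to 0$ absorbing the $v_k^2$ term and $\sum_k v_k=\infty$ forcing the contradiction. The only slip is the remark that on the already-stopped event ``both sides are zero'': at the first entry into $X_\epsilon$ the right-hand side $\|\hat x_k-x^*\|^2$ need not vanish, but the required inequality $\mathbf{E}\{Y_{k+1}\mid\hat{\mathcal F}_k\}\le Y_k-Z_k+W_k$ still holds there because $Y_{k+1}=0\le Y_k$ and $Z_k=W_k=0$, so nothing breaks.
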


\begin{theorem}\label{thm-dyn-r}
Suppose Assumptions \ref{asp-HF} and \ref{asp-HC} are satisfied.
Let $\{x_k\}$ be a sequence generated by Algorithm \ref{alg-Rand-QS} with the dynamic stepsize rule {\rm (S4)}.
Then $\{x_k\}$ converges to an optimal solution of problem \eqref{eq-SP} with probability 1.
\end{theorem}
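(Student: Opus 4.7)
The plan is to mimic the proof of Theorem \ref{thm-dyn} for the deterministic case, replacing the monotone-decrease argument by an application of the supermartingale convergence theorem (Theorem \ref{thm-super}). First I would plug the dynamic stepsize rule (S4), namely $v_k=\gamma_k\frac{R_{p,m}}{m}(f(x_k)-f^*)^{1/p}$, into the basic inequality \eqref{eq-BI-r} from Lemma \ref{lem-BI+random}. After expanding the two resulting terms and using $0<\underline{\gamma}\le\gamma_k\le\overline{\gamma}<2$, I expect to obtain, for every $x^*\in X^*$ and every $k\in\IN$,
\[
\mathbf{E}\!\left\{\|x_{k+1}-x^*\|^2\mid\mathcal{F}_k\right\}\le \|x_k-x^*\|^2-\underline{\gamma}(2-\overline{\gamma})\frac{R_{p,m}^2}{m^2}(f(x_k)-f^*)^{\frac{2}{p}}.
\]

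Next I would invoke Theorem \ref{thm-super} with $Y_k:=\|x_k-x^*\|^2$, $Z_k:=\underline{\gamma}(2-\overline{\gamma})\frac{R_{p,m}^2}{m^2}(f(x_k)-f^*)^{2/p}$ and $W_k\equiv 0$. Since all three quantities are nonnegative and $\mathcal{F}_k\subseteq \mathcal{F}_{k+1}$, this yields two conclusions with probability 1: the sequence $\{\|x_k-x^*\|^2\}$ converges, and $\sum_{k=0}^\infty (f(x_k)-f^*)^{2/p}<\infty$. The latter forces $(f(x_k)-f^*)^{2/p}\to 0$, and since $f(x_k)\ge f^*$, we obtain $\lim_{k\to\infty}f(x_k)=f^*$ almost surely. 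The former shows that $\{x_k\}$ is bounded almost surely, so it has at least one cluster point $\bar{x}$; by continuity of $f$ and $\lim f(x_k)=f^*$, any such $\bar{x}$ lies in $X^*$.

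It remains to upgrade cluster-point convergence to full convergence of $\{x_k\}$ to a single point in $X^*$. Here I would apply the supermartingale conclusion not to a fixed $x^*$ but to every $x^*$ in a countable dense subset $D$ of $X^*$ simultaneously: the null sets accumulate only countably many times, so on a single set of full probability the sequence $\{\|x_k-x^*\|^2\}$ converges for every $x^*\in D$. Combined with boundedness of $\{x_k\}$, continuity of the map $x^*\mapsto\|y-x^*\|^2$ propagates this convergence to arbitrary $x^*\in X^*$, and in particular to the random cluster point $\bar{x}\in X^*$ produced above. Because a subsequence of $\|x_k-\bar{x}\|^2$ tends to zero while the whole sequence converges, its limit must be zero, so $x_k\to \bar{x}\in X^*$ almost surely.

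The main obstacle I anticipate is precisely this last measurability subtlety: Theorem \ref{thm-super} only delivers convergence of $\|x_k-x^*\|^2$ for each \emph{fixed} $x^*$ on a null set depending on $x^*$, whereas I need convergence at the random point $\bar{x}$. The countable-dense-subset device together with a Fej\'er-type monotonicity argument (using the identity $\|x_k-x^*\|^2-\|x_k-y^*\|^2=-2\langle x_k,x^*-y^*\rangle+\|x^*\|^2-\|y^*\|^2$ evaluated along a sequence $y^*_n\in D$ with $y^*_n\to\bar{x}$, valid because $\{x_k\}$ is bounded) should resolve this cleanly. Everything else is an almost verbatim adaptation of Theorem \ref{thm-dyn}.
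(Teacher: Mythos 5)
Your proposal is correct and follows essentially the same route as the paper: plug (S4) into the basic inequality of Lemma \ref{lem-BI+random}, invoke the supermartingale convergence theorem to get a.s.\ convergence of $\{\|x_k-x^*\|^2\}$ and $f(x_k)\to f^*$, and then handle the measurability issue at the random cluster point via a countable dense subset of $X^*$ exactly as the paper does. The final approximation step you describe (propagating convergence from the dense set to $\bar{x}$ using boundedness and the triangle inequality) is the same argument the paper carries out with its set $\Theta$ and the points $z(\omega)$.
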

\begin{proof}
By Lemma \ref{lem-BI+random} and \eqref{eq-dyn+stepsize-r}, it follows that, for any $x^*\in X^*$ and any $k\in \IN$,
\begin{equation*}\label{eq-thm-dyn-r}
\begin{array}{lll}
\mathbf{E}\left\{\|x_{k+1}-x^*\|^2\mid\mathcal{F}_k\right\}&\le \|x_k-x^*\|^2-2v_k\frac{R_{p,m}}m (f(x_k)-f^*)^{\frac{1}{p}}+v_k^2\\
&= \|x_k-x^*\|^2-\gamma_k(2-\gamma_k) \frac{R_{p,m}^2}{m^2}(f(x_k)-f^*)^{\frac{2}{p}}\\
&\le \|x_k-x^*\|^2-\underline{\gamma}(2-\overline{\gamma}) \frac{R_{p,m}^2}{m^2}(f(x_k)-f^*)^{\frac{2}{p}}.
\end{array}
\end{equation*}
Then it follows from Theorem \ref{thm-super} that $\{\|x_k-x^*\|\}$ is convergent and $\sum_{k=1}^\infty (f(x_k)-f^*)^{\frac{2}{p}}<\infty$ with probability 1; consequently, $\lim_{k \to \infty} f(x_k)=f^*$ with probability 1.

Let $(\Omega,\mathcal{F},P)$ be the probability space. Let $Z$ be a countable and dense subset of $X^*$, and let
\[
\Theta(z):=\left\{\omega: \{\|x_k(\omega)-z\|\} \mbox{ is convergent}\right\} \quad \mbox{ for any } z\in Z,
\]
and
\[
\Theta:=\bigcap_{z\in Z} \Theta(z).
\]
Recall that $\{\|x_k-x^*\|\}$ is convergent with probability 1, that is $P(\Theta(x^*))=1$, for any $x^*\in X^*$. Then it follows that $P\left(\Theta(z)^c\right)=0$ for any $z\in Z\subseteq X^*$. By the elements of probability theory, one checks that
\begin{equation}\label{eq-Ptheta}
P(\Theta)=1-P\left(\Theta^c\right)=1-P\left(\bigcup_{z\in Z} \Theta(z)^c\right)\ge 1-\sum_{z\in Z} P\left(\Theta(z)^c\right)=1.
\end{equation}
For any $\omega \in \Theta$ and any $z\in Z$, it says that $\{\|x_k(\omega)-z\|\}$ is convergent; hence $\{x_k(\omega)\}$ is bounded and must have a cluster point.
Define $\bar{x}:\Omega \to \R^n$ be such that
\[
\mbox{$\bar{x}(\omega)$ is a cluster point of $\{x_k(\omega)\}$ for any $\omega \in \Theta$}.
\]
Note again that $\lim_{k \to \infty} f(x_k)=f^*$ with probability 1. Without loss of generality, we can assume that $\lim_{k\to \infty}f\left(x_k(\omega)\right)= f^*$ for any $\omega \in \Theta$. Then it follows from the continuity of $f$ that
\begin{equation}\label{eq-omega-a}
\mbox{$\bar{x}(\omega) \in X^*$\quad for any $\omega \in \Theta$}.
\end{equation}
Fix $\epsilon>0$ and $\omega\in \Theta$. Since $\bar{x}(\omega) \in X^*$ and $Z\subseteq X^*$ is dense, there exists $z(\omega)\in Z$ such that
\begin{equation}\label{eq-thm-Dyn-1}
\|\bar{x}(\omega)-z(\omega)\|\le \frac{\epsilon}3.
\end{equation}
Let $\{x_{k_i}(\omega)\}$ be a subsequence of $\{x_k(\omega)\}$ such that $\lim_{i\to \infty}x_{k_i}(\omega)= \bar{x}(\omega)$. Hence we obtain by \eqref{eq-thm-Dyn-1} that $\lim_{i\to \infty} \|x_{k_i}(\omega)-z(\omega)\| \le \frac{\epsilon}3$.
By the definition of $\Theta$, one has that $\{\|x_k(\omega)-z(\omega)\|\}$ is convergent, and so $\lim_{k\to \infty} \|x_{k}(\omega)-z(\omega)\| \le \frac{\epsilon}3$. Then there exists $N\in \mathbb{N}$ such that
\begin{equation*}\label{eq-thm-Dyn-3}
\|x_k(\omega)-z(\omega)\|\le \frac{2\epsilon}3\quad \mbox{for any $k\ge N$}.
\end{equation*}
This, together with \eqref{eq-thm-Dyn-1}, yields
\begin{equation*}
\|x_k(\omega)-\bar{x}(\omega)\|\le \|x_k(\omega)-z(\omega)\| + \|\bar{x}(\omega)-z(\omega)\| \le \epsilon\quad \mbox{for any $k\ge N$}.
\end{equation*}
This shows that $\{x_k(\omega)\}$ converges to $\bar{x}(\omega)$ for any $\omega \in \Theta$. This, together with \eqref{eq-omega-a}, says that
\[
\Theta\subseteq \left\{\omega\in \Omega: \{x_k(\omega)\} \mbox{ converges to } \bar{x}(\omega)\right\}\cap
\left\{\omega\in \Omega: \bar{x}(\omega) \in X^*\right\}.
\]
Noting by \eqref{eq-Ptheta} that $P(\Theta)=1$, we conclude
\[
P\left(\left\{\omega\in \Omega: \{x_k(\omega)\} \mbox{ converges to } \bar{x}(\omega),\, \bar{x}(\omega) \in X^*\right\}\right)\ge P(\Theta)=1.
\]
The proof is complete.
\end{proof}

\section{Application to sum of ratios problems}
This section aims to present an important application of sum-minimization problem \eqref{eq-SP} of a number of quasi-convex component functions.
Typically, fractional programming, optimizing a certain indicator (e.g. efficiency) characterized by a ratio of technical terms, is widely applied in various areas; see \cite{Avriel10,CrouzeixMM98,Stancu97} and references therein. In particular, the sum of ratios problem (SOR) \cite{SchaibleSOR03} is a typical fractional programming and has a variety of important applications in economics and management science, which is formulated as
\begin{equation}\label{eq-SOR}
  \begin{array}{ll}
  \text{max}& \sum_{i=1}^m R_i(x):=\frac{p_i(x)}{c_i(x)}\\
     \text{s.t.}  & x\in X,
    \end{array}
\end{equation}
where $p_i:\R^n\to \R$ is nonnegative and concave, $c_i:\R^n\to \R$ is positive and convex for each $i=1,\dots,m$. It is difficult to globally solve the SOR \eqref{eq-SOR}, especially for large-scale problems.

Exploiting the additivity structure of problem \eqref{eq-SOR}, we propose a new approach to find a global optimal solution of the SOR. By \cite[Theorems 2.3.3 and 2.5.1]{Stancu97}, we have that the ratio $R_i$ is quasi-concave for each $i=1,\dots,m$, and so \eqref{eq-SOR} is a sum-maximization problem of a number of quasi-concave functions. This shows that the SOR falls in the framework \eqref{eq-SP}. Moreover, let $r_i$ denote the maximal ratio of $R_i$ over $X$, and define $h_i(\cdot):=r_i-R_i(\cdot)$. The SOR \eqref{eq-SOR} can also be approached by solving the resulting quasi-convex feasibility problem \eqref{eq-FP}.
In \cite{CensorSegal06}, Censor and Segal proposed a subgradient projection method to solve the quasi-convex feasibility problem \eqref{eq-FP} by using the dynamic stepsize rule.
In the numerical study, we apply the incremental subgradient methods and the subgradient projection method to solve the SOR \eqref{eq-SOR} and its reformulated quasi-convex feasibility problem \eqref{eq-FP}, respectively, and the abbreviations of these methods are listed in Table \ref{tab-algorithms}.

\begin{table}[h]\footnotesize
  \centering
  \caption{List of the algorithms compared in the numerical study.}
\begin{tabular}{|l|l|}
\hline
Abbreviations  &  \multicolumn{1}{|c|}{Algorithms}    \\ \hline
SGPM  &  \textbf{S}ub\textbf{G}radient \textbf{P}rojection \textbf{M}ethod in \cite{CensorSegal06}, which is to solve \eqref{eq-FP}.      \\ \hline
IncSGM  &  \textbf{Inc}remental \textbf{S}ub\textbf{G}radient \textbf{M}ethod (Algorithm \ref{alg-IncQS}) for solving \eqref{eq-SOR}.     \\ \hline
RandSGM  & \textbf{Rand}omized incremental \textbf{S}ub\textbf{G}radient \textbf{M}ethod (Algorithm \ref{alg-Rand-QS}) for solving \eqref{eq-SOR}. \\ \hline
\end{tabular}
\label{tab-algorithms}
\end{table}

In the numerical study, we consider the multiple Cobb-Douglas productions efficiency problem (in short, MCDPE) \cite{BradleyFrey74}, which is an application of the SOR.
Formally, consider a set of $m$ productions with $s$ projects and $n$ factors. Let $x:=(x_j)^T\in \R^n$ denote the amounts of $n$ factors. The profit function of production $i$ can be expressed as the Cobb-Douglas production function
\[
p_i(x):=a_{i,0}\prod_{j=1}^n x_j^{a_{i,j}},
\]
where $a_{i,j}\ge 0$ for $j=0,\dots,n$ and $\sum_{j=1}^n a_{i,j}=1$. The cost function of production $i$ is formulated as a linear function
\[
c_i(x):=\sum_{j=1}^n c_{i,j}x_j+c_{i,0},
\]
where $c_{i,j}\ge 0$ for $j=0,\dots,n$.
Due to the daily profit or operating cost constraints, the amounts of investment for factors should fall in the constraint set
\[
X:=\{x\in \R^n_+:\sum_{j=1}^n b_{tj}x_j\ge p_t,\quad t=1,\dots,s\}.
\]
Then the MCDPE is modeled as the SOR \eqref{eq-SOR}.
In the numerical experiments, the parameters of MCDPE are randomly chosen from different intervals:
\[
a_{i,0}\in [0,10],\quad a_{i,j},b_{tj},c_{i,0},c_{i,j}\in [0,1],\quad {\rm and}\quad p_t\in [0,n/2].
\]
The diminishing stepsize rule is chosen as
\[
v_k=v/(1+0.1k),
\]
where $v$ is always chosen between $[2,5]$, while the constant stepsize is selected between $[1,2]$. 
All numerical experiments are implemented in MATLAB R2014a and executed on a personal laptop (Intel Core i5, 3.20 GHz, 8.00 GB of RAM).

We first compare the performances (in both the obtained objective value and the CPU time) of the SGPM, IncSGM and RandSGM for different dimensions. The computation results are displayed in Table \ref{tab-performance}.
In this table, the columns of Projects, Factors and Productions represent the numbers of projects ($s$), factors ($n$) and productions ($m$) of MCDPE, and the columns of $f_{opt}$ and CPUtime denote the obtained optimal value and the CPU time (seconds) cost to reach $f_{opt}$ by each algorithm, respectively.
It is observed from Table \ref{tab-performance} that the IncSGM and RandSGM outperform the SGPM in the sense that they achieve a larger production efficiency in a shorter time than the SGPM for different dimensional MCDPEs.

\begin{table}[h]\footnotesize
  \centering
  \caption{Computation results for maximizing MCDPE.}
\begin{tabular}{|ccc|cc|cc|cc|}
\hline
\multicolumn{3}{|c|}{Circumstance of problem } &\multicolumn{2}{|c|}{SGPM} &\multicolumn{2}{|c|}{IncSGM} &\multicolumn{2}{|c|}{RandSGM} \\ \hline
Projects  & Factors & Productions &  $f_{opt}$ & CPUtime     &  $f_{opt}$ & CPUtime     &  $f_{opt}$ & CPUtime               \\ \hline
50        &50　　   &10           &  23.31    & 0.51     &  23.46    & 0.17     &  23.48    & 0.18               \\ \hline
50        &50　　   &100          &  210.22    & 3.38     &  211.86    & 2.41     &  211.84    & 1.74               \\ \hline
100       &100 　   &10           &  11.73    & 0.41     &  11.77    & 0.26     &  11.81    & 0.23               \\ \hline
100       &100 　   &100          &  104.20    & 2.62     &  106.52    & 1.40     &  106.49    & 1.03               \\ \hline
500       &500 　　 &10           &  2.21    & 1.45     &  2.31    & 0.54     &  2.34    & 0.38               \\ \hline
500       &500 　　 &100          &  21.01    & 9.61     &  21.25    & 5.93     &  21.24    & 4.28               \\ \hline
1000      &1000  　 &10           &  1.15    & 3.23     &  1.19    & 1.69     &  1.21    & 1.47               \\ \hline
1000      &1000  　 &100          &  10.56    & 19.64     &  10.62    & 12.48     &  10.60    & 10.41               \\ \hline
\end{tabular}
\label{tab-performance}
\end{table}

The second experiment is to compare the convergence behavior of the SGPM, IncSGM and RandSGM by using the constant and diminishing stepsize rules, where the problem size is fixed to be $(m,n,s)=(10, 100, 100)$. We summary the averaged performance of the compared algorithms in 500 random trials.
Figure \ref{fig-ave-convergence} plots the mean of the estimated Cobb-Douglas production efficiencies along the number of the iterations in these 500 trials, from which we observe that the IncSGM converges faster (in terms of the number of iterations) to an (approximate) optimal value that the RandSGM and the SGPM. Furthermore, Figure \ref{fig-ave-convergence}(a) illustrates that the RandSGM obtains a better estimation than the IncSGM when the constant stepsize rule is adopted, which is consistent with Remark \ref{rem-CT-Ran}. Figure \ref{fig-ave-convergence}(b) demonstrates that both IncSGM and RandSGM converge to an optimal value when the diminishing stepsize rule is employed, which is consistent with Theorems \ref{thm-dimi} and \ref{thm-dimi-r}. It is also shown that both IncSGM and RandSGM approach a better solution that the SGPM.
Figure \ref{fig-ave-time} plots the error bars of the CPU times in 500 trials when varying the number of component functions from 10 to 200. It is revealed that the RandSGM is faster (in terms of CPU time) than the IncSGM, which is faster than the SGPM. This indicates the potential applicability of the RandSGM to the large-scale SOR. Figure \ref{fig-500trials} plots the obtained maximal production efficiencies in each of these 500 trials. It is observed that the IncSGM and RandSGM outperform the SGPM consistently.

\begin{figure}[h]
\centering
\mbox{ \subfigure[The constant stepsize rule.]{\includegraphics[width=6.5cm]{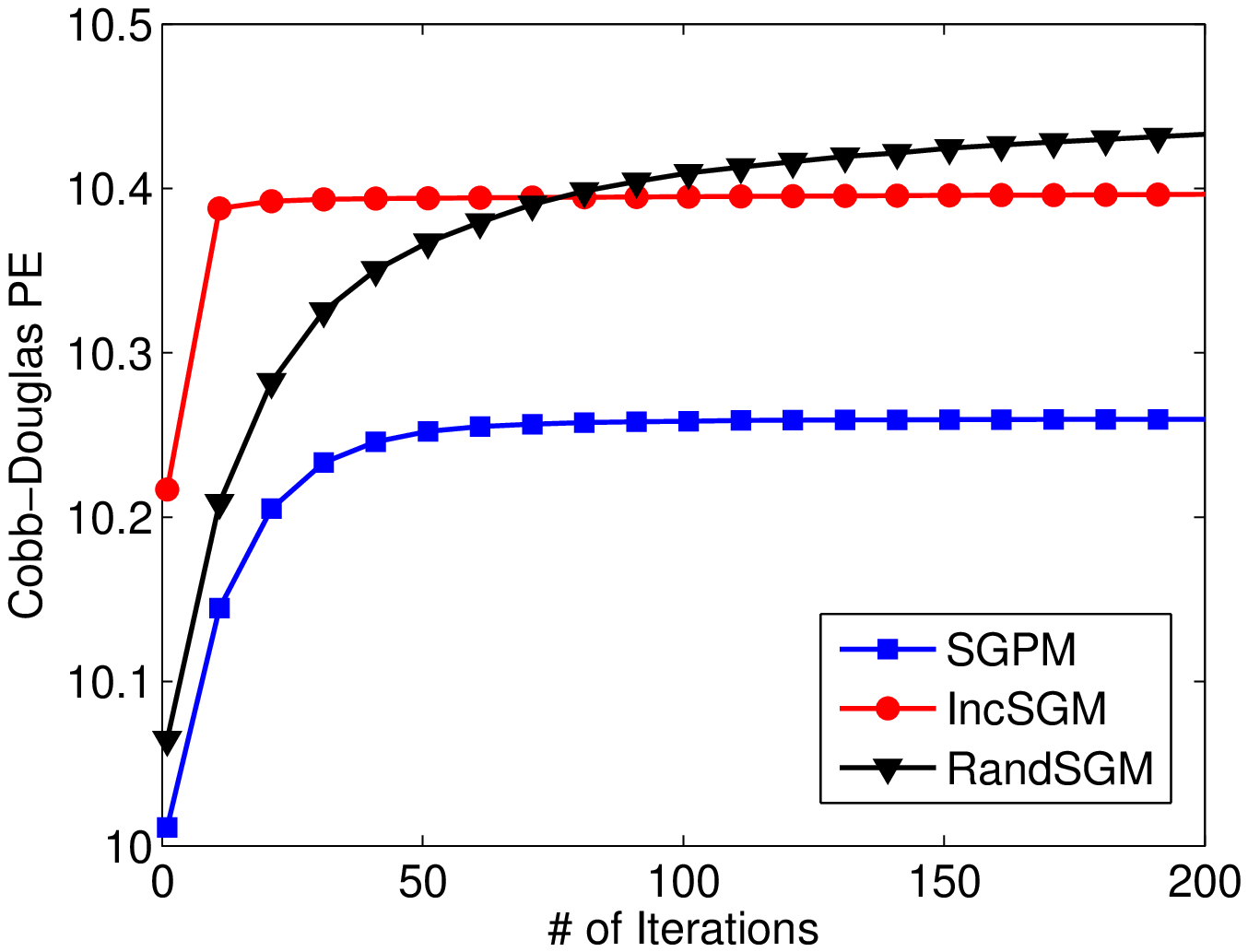}} \quad
\subfigure[The diminishing stepsize rule.]{\includegraphics[width=6.5cm]{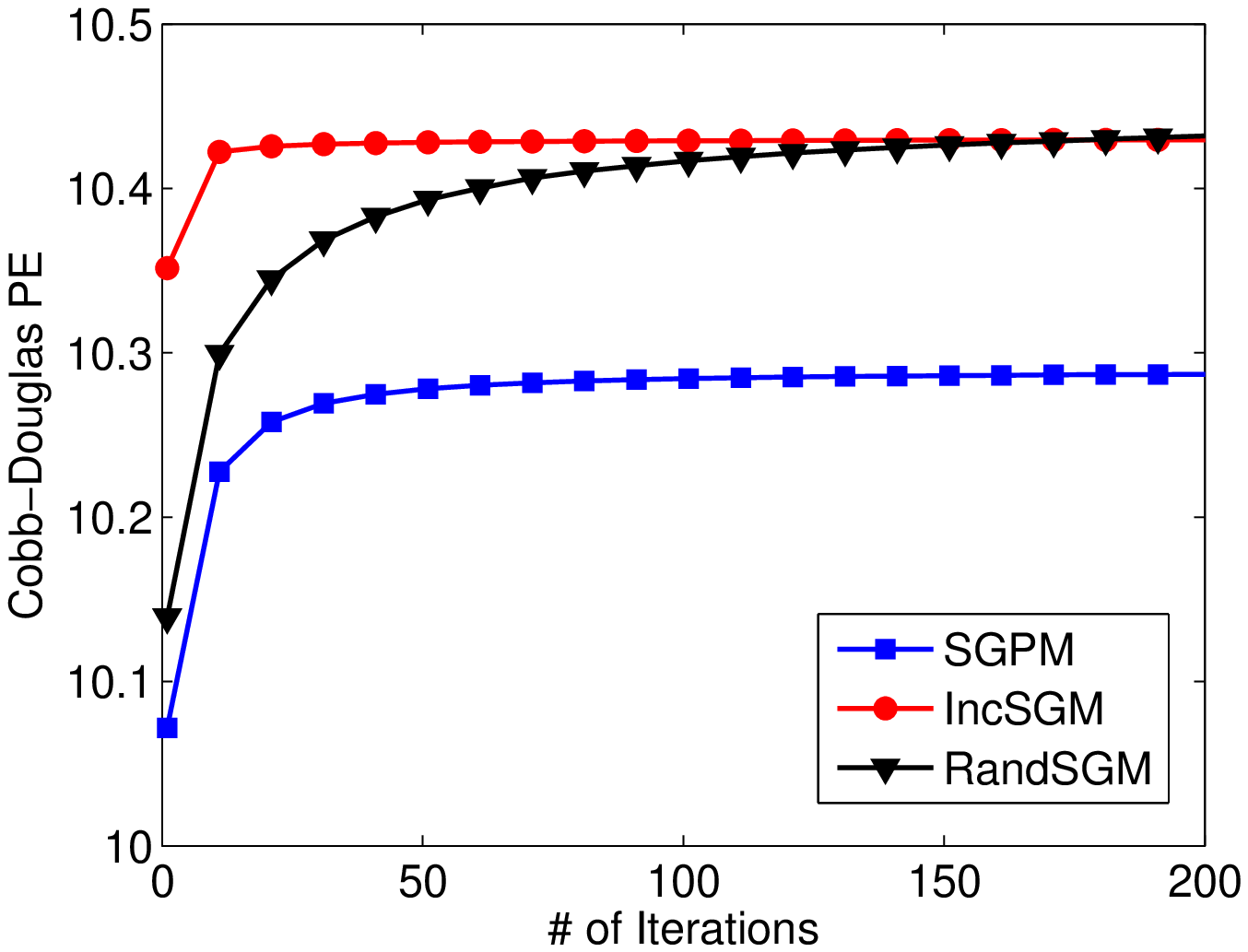}}
}
\caption{The averaged convergence behavior of SGPM, IncSGM and RandSGM in 500 random MCDPEs.}
\label{fig-ave-convergence}
\end{figure}

\begin{figure}[h]
\centering
\mbox{ \subfigure[The constant stepsize rule.]{\includegraphics[width=6.5cm]{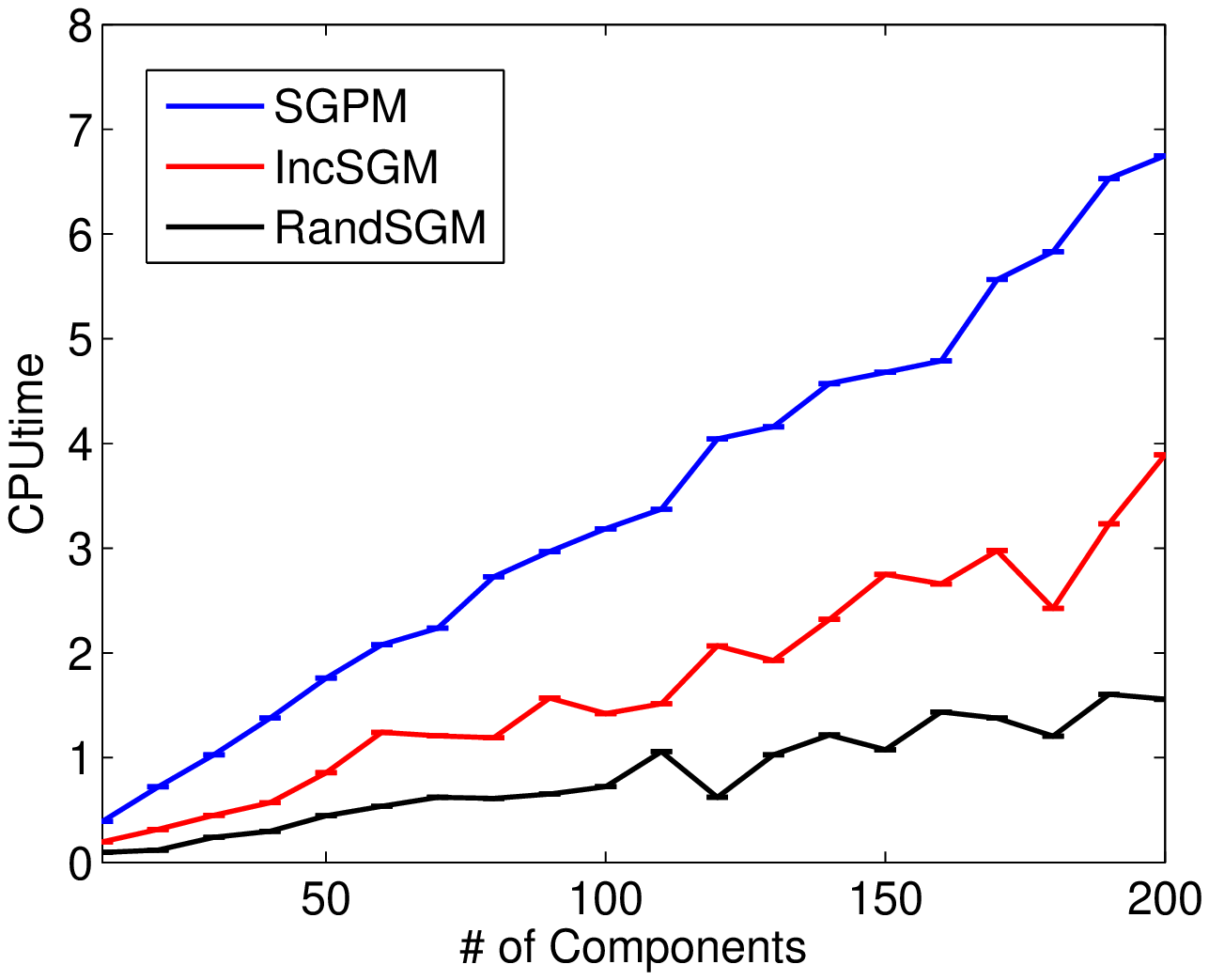}} \quad
\subfigure[The diminishing stepsize rule.]{\includegraphics[width=6.5cm]{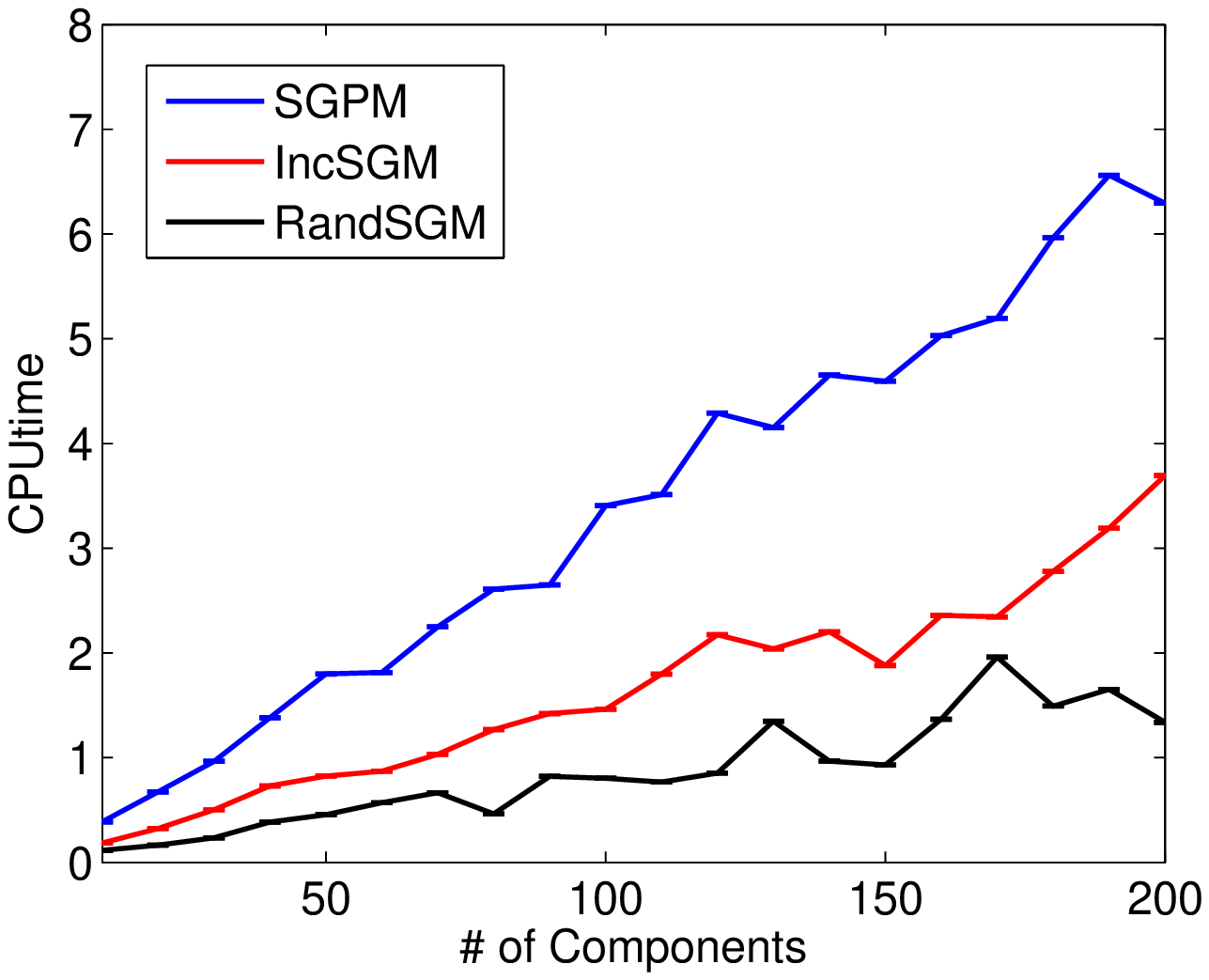}}
}
\caption{Variation of CPU time when varying the number of component functions.}
\label{fig-ave-time}
\end{figure}

\begin{figure}[h]
\centering
\mbox{ \subfigure[The constant stepsize rule.]{\includegraphics[width=6.5cm]{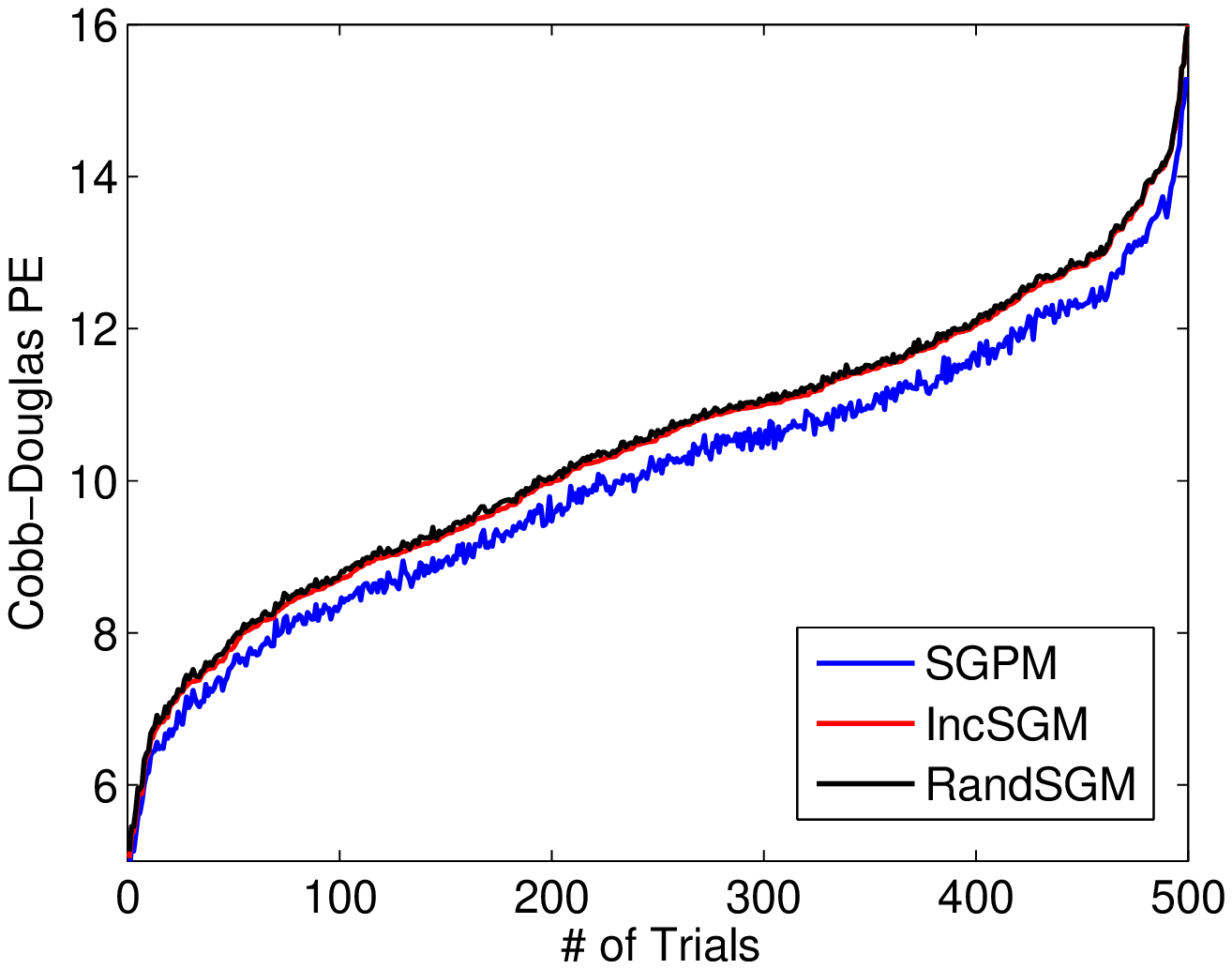}} \quad
\subfigure[The diminishing stepsize rule.]{\includegraphics[width=6.5cm]{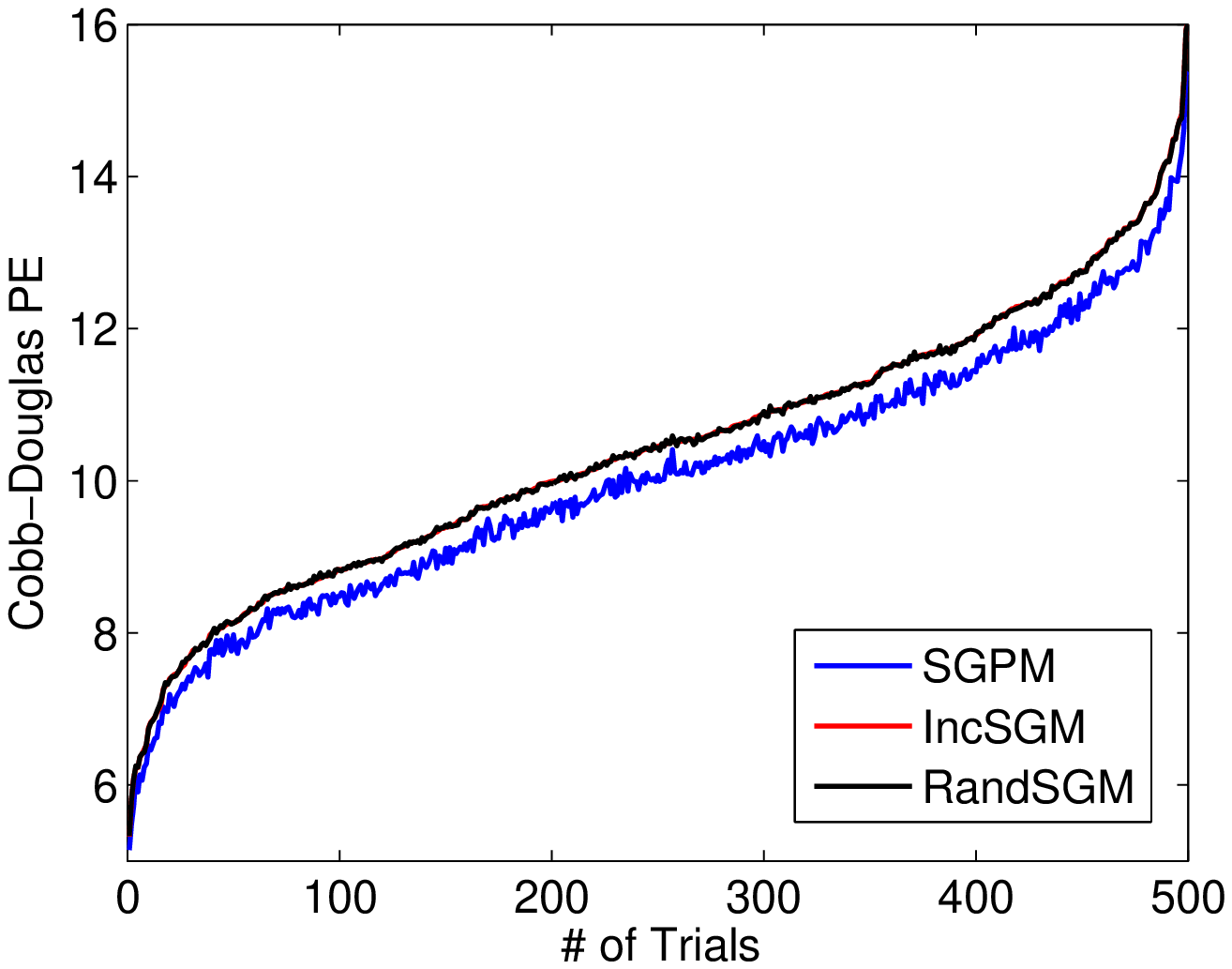}}
}
\caption{Overall of obtained maximal values in 500 random MCDPEs.}
\label{fig-500trials}
\end{figure}


\begin{figure}[h]
\centering
  \includegraphics[width=7cm]{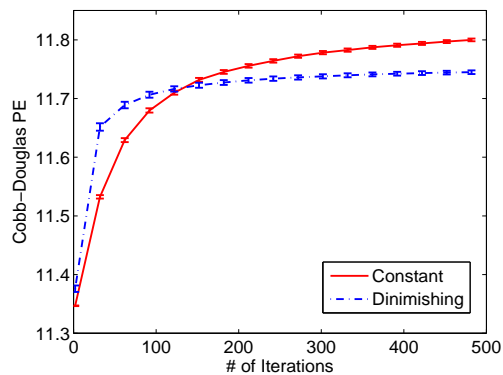}\\
  \caption{The error bars of RandSGM in 500 simulations.}
  \label{fig-ErrorBar}
\end{figure}

Finally, we conduct 500 simulations to show the stability of RandSGM, which start from the same initial point, adopt the same stepsizes (constant: $v_k\equiv 1.5$ or diminishing: $v_k=3/(1+0.1k)$) and solve a same MCDPE, but follow different stochastic processes. Figure \ref{fig-ErrorBar} plots the error bars of the estimated Cobb-Douglas production efficiencies in these 500 simulations. It is shown that the RandSGM is highly stable and converges to an optimal value with probability 1.

%


\end{document}